 \newtheorem{theorem}{Theorem}[section]
\newtheorem{lemma}{Lemma}[section]
\newtheorem{definition}{Definition}[section]
\newtheorem{remark}{Remark}[section]
\newtheorem{example}{Example}[section]
\newcommand{\footremember}[2]{%
    \footnote{#2}
    \newcounter{#1}
    \setcounter{#1}{\value{footnote}}%
}
\def\B{{\mathbb B}}
\def\R{{\mathbb  R}}
\def\C{\mathbb{C}}
\def\H{{\bf  H}}
\def\Q{{\bf  Q}}
\def\v{{\bf v}} 
\def\w{{\bf w}}
\def\z{{\bf z}}
\def\x{{\bf x}}
\def\y{{\bf y}}
\author{
Shuai Li\footremember{bjtu1}{School of Mathematics and Statistics, Beijing Jiaotong University, China.}~~
    Shenglong Zhou\footnotemark[2]
  \footremember{bjtu2}{Email: 24110488@bjtu.edu.cn, shlzhou@bjtu.edu.cn}
}
\title{\vspace{-1.25cm}
\bf Computing Binary Integer Programming \\
via A New Exact Penalty Function\thanks{This work is supported by the Fundamental Research Funds for the Central Universities (2024YJS091).}
\vspace{-0.25cm}}
\date{}
\begin{document}
\flushbottom
 
\maketitle
 
\vspace{-1.3cm}

\begin{abstract}  
\noindent \textbf{Abstract:} Unconstrained binary integer programming (UBIP) poses significant computational challenges due to its discrete nature. We introduce a novel reformulation approach using a piecewise cubic function that transforms binary constraints into continuous equality constraints. Instead of solving the resulting constrained problem directly, we develop an exact penalty framework with a key theoretical advantage: the penalty parameter threshold ensuring exact equivalence is independent of the unknown solution set, unlike classical exact penalty theory. To facilitate the analysis of the penalty model, we introduce the concept of P-stationary points and systematically characterize their optimality properties and relationships with local and global minimizers. The P-stationary point enables the development of an efficient algorithm called APPA, which is guaranteed to converge to a P-stationary point within a finite number of iterations under a single mild assumption, namely, strong smoothness of the objective function over the unit box. Comprehensive numerical experiments demonstrate that APPA outperforms established solvers in both accuracy and efficiency across diverse problem instances.
\vspace{0.3cm} 
 
\noindent{\textbf{Keywords}:} UBIP, piecewise cubic function, exact penalty theory,  P-stationary points, global convergence, termination within finite iterations
\end{abstract}

\numberwithin{equation}{section}

\section{Introduction}\label{Section-Introduction}
This paper focus on the following unconstrained binary integer programming (UBIP), 
\begin{equation}
\label{UBIP}\min\limits_{\x\in\R^n}~f(\x),~~{\rm s.t.}~~\x\in\{0,1\}^n,\tag{UBIP}
\end{equation}
where function ${f:\R^n\to\R\cup\{\infty\}}$ is continuously differentiable. The UBIP problem finds extensive applications spanning traditional combinatorial optimization and contemporary machine learning. In combinatorial optimization, it models classical problems like the knapsack problem \cite{bas23,mer78} and the max-cut problem \cite{bur02,yang22}. In machine learning, UBIP has attracted considerable attention for adversarial attacks \cite{esm19,wee21}, transductive inference \cite{joa99}, and binary neural networks \cite{las16,qin20}. For additional applications, we refer readers to \cite{kochenberger2014unconstrained,liu23,pan23}. Despite its broad applicability, \eqref{UBIP} is known to be NP-hard due to the combinatorial nature of binary variables \cite{gar79,koc14}. Instead of directly solving \eqref{UBIP},  we aim to address the following constrained optimization, 
\begin{equation}\label{SCO}
\begin{aligned}
\min\limits_{x\in{\R^n}}~ f(\x),~~{\rm s.t.}~~& g(x_i) =0,~x_i\in B,~i\in[n], 
\end{aligned} 
\end{equation}
where $[n]:=\{1,2,\cdots,n\}, B:= \{x\in\R: 0\leq x \leq 1\},$ and $g:\R\to\R\cup\{\infty\}$ is a piecewise cubic function defined as follows and its graph is shown in Figure \ref{fig:cubic},
\begin{equation}\label{def-h-h}
g(x):=\begin{cases} x^3-3x^2+3x,&x \leq  1/2,\\
1-x^3 ,&x>  1/2.
\end{cases} 
\end{equation} 
 
\begin{figure}[!t]
	\centering
		\includegraphics[width=0.5\textwidth]{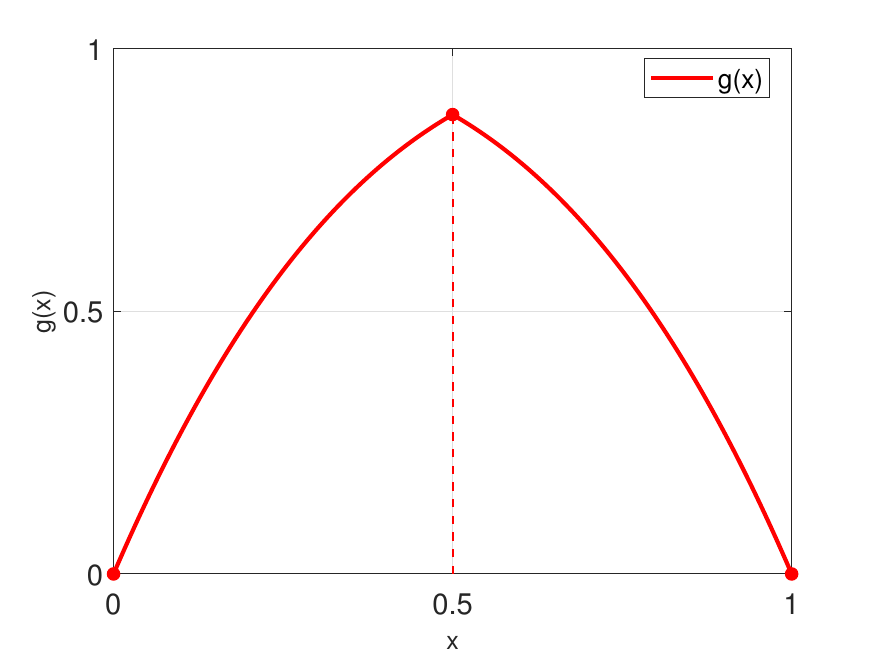}
	\caption{The graph of $g(x)$} \label{fig:cubic}
\end{figure}

\subsection{Related work}  

Various approaches have been developed to process binary constraints in optimization problems, which can be broadly classified into two categories: relaxation methods and equivalent reformulations. Table \ref{setupdiff} provides a summary of representative techniques from each category, which we briefly review below.

Relaxation approaches address binary constraints by transforming them into continuous optimization models. A widely adopted approach is linear programming (LP) relaxation \cite{Hsieh15, Kom07}, which replaces the discrete binary constraint with a continuous box constraint. This reformulation converts the NP-hard binary problem into a tractable convex optimization problem that can be efficiently solved by modern first-order methods. Spectral relaxation \cite{Cour07, Shi07} offers an alternative strategy by substituting the binary constraint with a spherical constraint, allowing the problem to be solved via eigenvalue decomposition.  Semi-definite programming (SDP) relaxation provides another powerful framework by introducing a matrix lifting technique and relaxing the binary constraint through a positive semidefinite cone \cite{wolkowicz2012handbook, anjos2011handbook}. The resulting SDP formulations can be solved using various specialized algorithms, including interior point methods \cite{wolkowicz2012handbook}, quasi-Newton and smoothing Newton methods \cite{Wang16}, spectral bundle methods \cite{helmberg2000spectral}, and branch-and-bound algorithms \cite{buchheim2013semidefinite, buc19}. To achieve tighter relaxations, doubly positive methods \cite{Huang14, Wen10} impose non-negativity constraints on both the eigenvalues and entries of the SDP solution matrix. Empirical studies have shown that these refined relaxation techniques can substantially improve solution quality compared to  SDP approaches.
 
 Equivalent reformulations provide another approach to addressing binary constraints in optimization problems. Unlike relaxation methods, these approaches transform the problem while preserving exact equivalence. A piecewise separable reformulation \cite{Zhang07} converts UBIP into an equality-constrained optimization problem solvable by penalty methods. Continuous $\ell_2$-box non-separable reformulation \cite{Rag69} has been employed to reformulate the UBIP as a continuous optimization problem, enabling the use of second-order interior-point methods \cite{Mar12, Murray10}. More generally, \cite{Wu19} proposed a continuous $\ell_p$-norm ($p > 0$) box reformulation and applied ADMM for its solution. A different perspective was taken in \cite{Yuan17}, where the authors reformulated UBIP as an MPEC problem by embedding the $\ell_2$ norm constraint within an equilibrium framework. The resulting formulation is an augmented biconvex optimization problem with a bilinear equality constraint, amenable to solution via exact penalty methods. Additionally, binary optimization can be reformulated as an $\ell_0$ norm semi-continuous optimization problem \cite{yuan2016sparsity}. Our proposed approach based on piecewise cubic function also belongs to this class of equivalent reformulations, as shown in Table \ref{setupdiff}.

 \begin{table}[!t]
\renewcommand{\arraystretch}{1.1}\addtolength{\tabcolsep}{0pt}
\centering
\caption{Main techniques to tackle the binary constraints. \label{setupdiff}}
\begin{tabular}{lll}\hline
 Methods & References &Reformulations \\
\hline
\multicolumn{3}{c}{Relaxation methods}\\\hline
 LP relaxation &\cite{Hsieh15,Kom07}& $\{0,1\}^n \approx\{\x\mid 0 \leq \x \leq 1\}$ \\

 Spectral relaxation &\cite{Cour07,Shi07} & $\{0,1\}^n \approx\left\{\x \mid\|2\x-1\|_2^2=n\right\}$\\

 SDP relaxation &\cite{buc19,Wang16}  & $\{0,1\}^n \approx\left\{\x \mid \textbf{X} \succeq \x \x^\top, \operatorname{diag}( \textbf{X} )=\x\right\}$ \\

Doubly positive relaxation &\cite{Huang14,Wen10} & $\{0,1\}^n \approx\left\{\x \mid  \textbf{X}  \succeq (2\x-1) (2\x-1)^\top, \operatorname{diag}( \textbf{X} )=1\right\}$ \\\hline 
\multicolumn{3}{c}{Equivalent reformulations}\\\hline
 Piecewise &\cite{Zhang07} & $\{0,1\}^n \Leftrightarrow\{\x \mid\x\odot(1-\x)=0\}$ \\ 
 
  $\ell_2$ box &\cite{Murray10,Rag69} & $\{0,1\}^n \Leftrightarrow\left\{\x \mid 0 \leq \x \leq 1,\|2\x-1\|_2^2=n\right\}$ \\

  $\ell_2$ box MPEC  &\cite{Yuan17} & $\{0,1\}^n \Leftrightarrow\left\{\x \mid 0\leq \x \leq1, \|\v\|^2\leq n,\langle 2\x-1,\v\rangle = n\right\}$ \\


  $\ell_p$ box ($p>0$) &\cite{Wu19} & $\{0,1\}^n \Leftrightarrow\left\{\x \mid0\leq \x \leq 1,\|2\x-1\|_p^p=n, p>0\right\}$ \\

$\ell_0$ norm &\cite{yuan2016sparsity} & $\{0,1\}^n \Leftrightarrow\left\{\x \mid\|\x\|_0+\|\x-1\|_0 \leq n\right\}$ \\
Piecewise cubic & This paper & $\{0,1\}^n \Leftrightarrow\left\{\x \mid  0 \leq \x \leq 1, g(x_i)=0,i\in[n]\right\}$ \\
\hline
\end{tabular}
\end{table}
\subsection{Contribution and organization} 
In this paper, we introduce a piecewise cubic function to reformulate UBIP as the continuous model \eqref{SCO}. This reformulation offers two key advantages: strong theoretical guarantees and the ability to develop efficient numerical algorithms. In other words, it provides a practical and effective continuous optimization framework for solving the discrete programming problem \eqref{UBIP}. Our main contributions are summarized as follows.

\textit{1) New exact penalty theorems with solution-independent penalty parameters}.  To tackle \eqref{SCO} (equivalent to \eqref{UBIP}), we focus on its penalty model. By introducing the notion of P-stationary points, which serve as a key tool for both theoretical analysis and algorithm design, we establish novel exact penalty results showing that global minimizers of \eqref{SCO} and the penalty model coincide when the penalty parameter exceeds a threshold that is independent of the solution set of \eqref{SCO}. This property distinguishes our theory from classical exact penalty frameworks, where the penalty parameter typically depends on the solution set of the original problem. Moreover, we systematically analyze the relationships between P-stationary points and local/global minimizers of \eqref{SCO} (see Figure \ref{fig:relation}). These relationships reveal that computing P-stationary points provides a practical and theoretically justified strategy for solving \eqref{UBIP}, inspiring our algorithmic approach.

\textit{2) A simple numerical algorithm with strong convergence properties}. To solve the penalty problem \eqref{pi-penalty}, we develop a novel algorithm called APPA (Adaptive Proximal Point Algorithm). Despite the adaptive updates of both the P-stationary parameter and the penalty parameter, we establish that the whole sequence converges to a P-stationary point when $f$ is strongly smooth on $\B$, as shown in Theorem \ref{global-convergence}. Remarkably, the algorithm exhibits finite termination, as outlined in Theorem \ref{finite-termination}, a property that is stronger than quadratic convergence.

\textit{3) Superior numerical performance}. To evaluate the performance of APPA, we conduct comprehensive numerical experiments across diverse problem classes, benchmarking it against several state-of-the-art solvers, including the commercial solver GUROBI. The numerical experiments demonstrate that APPA achieves competitive solution quality while maintaining significantly faster computational times, especially in high-dimensional settings.

The remainder of this paper is organized as follows. The next subsection introduces the notation and preliminary concepts used throughout. Section \ref{Section-model} presents the penalty reformulation of \eqref{SCO} and establishes the exact penalty theory. Section \ref{Section-algorithm} develops the APPA algorithm and analyzes its convergence properties. Extensive numerical experiments and concluding remarks are given in the last two sections.

\subsection{Preliminaries} \label{Section-pre}
We end this section by introducing some notation to be used throughout the paper. The $n$-dimensional unit box is denoted by
$$ \B:=\{\x\in\R^n:  x_i\in B, i\in[n]\},$$
where $ B:=\{x\in\R:  0\le x\le1\}$, $[n]={1,2,...,n}$, and `$:=$' means `define'. We denote by $\|\x\|$ and $\|\x\|_\infty$ the $\ell_2$- and $\ell_\infty$-norms of a vector $\x\in \R^n$, respectively. For two vectors $\x, \y \in \R^n$, $\langle \x, \y \rangle$ denotes their inner product, i.e., $\langle \x, \y \rangle=\x^{\top}\y=\sum{x_iy_i}$. Let $\lceil t\rceil$ be the ceiling of $t$, i.e., the smallest inter no less than $t$. For a given set $\Omega$, the indicator function $\delta_\Omega(\cdot)$ is defined as
\begin{equation*} 
\delta_\Omega(\x):= \begin{cases} ~0,& {\rm if}~\x\in\Omega,\\
~\infty,& {\rm if}~\x\notin\Omega.\end{cases}
\end{equation*}

We denote by $N_{\B}(\x)$ the normal cone \cite{rock98} of the set $\B$ at the point $\x$. Then any  $\v\in N_{\B}(\x)$ satisfies
\begin{equation}\label{normal-cone-B}
v_i \in \begin{cases} ~(-\infty,0],& {\rm if}~x_i=0,\\
~\{0\},& {\rm if}~x_i\in(0,1),\\
~[0,+\infty),& {\rm if}~x_i=1.\end{cases}
\end{equation}
For a lower semi-continuous function {$f:{\mathbb R}^n\rightarrow \R$}, the definition of the (limiting) subdifferential denoted by {$\partial f$ can be found in \cite[Definition 8.3]{rock98}, which allows us to calculate \begin{equation}\label{g-subdiff}
\partial g(x)=\begin{cases}
\left\{3x^2-6x+3\right\},&x <{1}/{2},\\ 
\left\{- {3}/{4}, {3}/{4}\right\},&x =  {1}/{2},\\ 
\left\{-3x^2\right\} ,&x>  {1}/{2}.
\end{cases} 
\end{equation} 
One can easily verify that
\begin{equation} \label{lowerbound}
	|\nu|\ge3, ~\forall \nu \in \partial g(x), ~\forall x \in B.
\end{equation}
 We say function $f$ is $L$-strong smoothness on box $\B$ if 
$$ 
f(\x) \leq f(\w)+\langle \nabla f(\w), \x-\w\rangle +  \frac{L}{2}\|\x-\w\|^2, ~~\forall~\x,\w\in \B,
$$
where $L>0$. This is a weaker condition than the strong smoothness of $f$ on whole space $\R^n$. A sufficient condition to $L$-strong smoothness on box $\B$ is the twice continuously differentiability of $f$.  Moreover, we say function $f$ is $\ell$-strong convex on  box $\B$ if 
$$ 
f(\x) \geq f(\w)+\langle \nabla f(\w), \x-\w\rangle +  \frac{\ell}{2}\|\x-\w\|^2, ~~\forall~\x,\w\in \B,
$$
where $\ell>0$. The proximal operator of a function ${p}:\R^n\to\R$, associated with a parameter $\tau>0$, is defined by
\begin{equation}\label{proximal-varphi}
{\rm Prox}_{\tau {p}}(\z):=\operatorname*{argmin}\limits_{\x\in \R^n}~ {p}(\x)+\frac{1}{2\tau}\|\x-\z\|^2.
\end{equation}
Specifically, we define the proximal operator of the function $p+\delta_\Omega$ as follows:
\begin{equation}\label{proximal-p-indicator}
	{\rm Prox}_{\tau {p}}^{\Omega}(\z):={\rm Prox}_{\tau ({p}+\delta_\Omega)}(\z)=\operatorname*{argmin}\limits_{\x\in B}~ {p}(\x)+\frac{1}{2\tau}\|\x-\z\|^2,
\end{equation}
where $\Omega$ is a given set. Finally, we present the explicit form of ${\rm Prox}_{\tau g}^B(z)$ in the following lemma, with its graph  shown in Figure \ref{fig:Prox-cubic}.
\begin{figure}[!t]
	\centering
	\begin{subfigure}[b]{0.49\textwidth}
		\centering
		\includegraphics[width=0.99\textwidth]{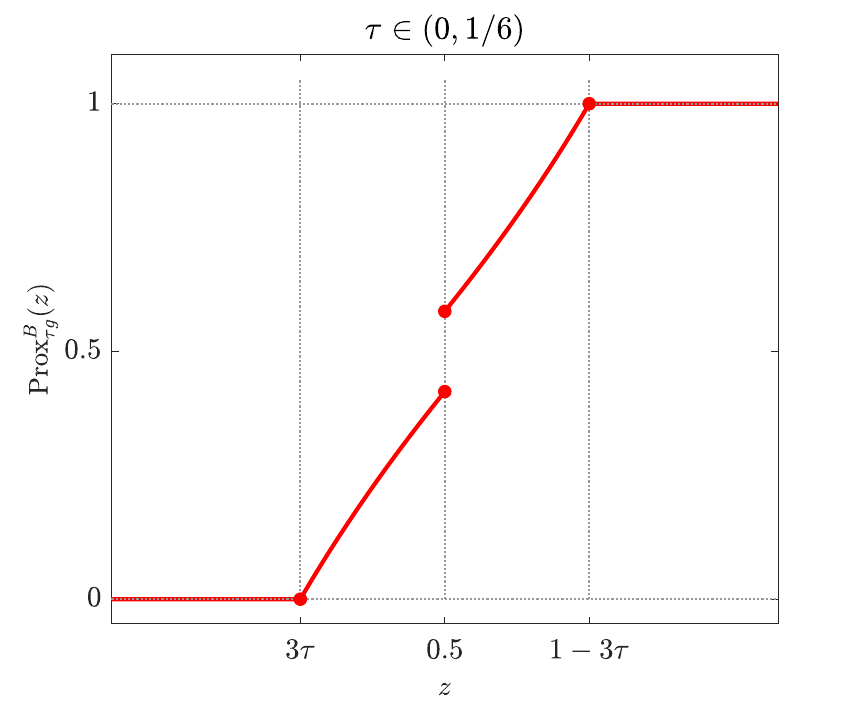}
	\end{subfigure} 
	\begin{subfigure}[b]{0.49\textwidth}
		\centering
		\includegraphics[width=0.99\textwidth]{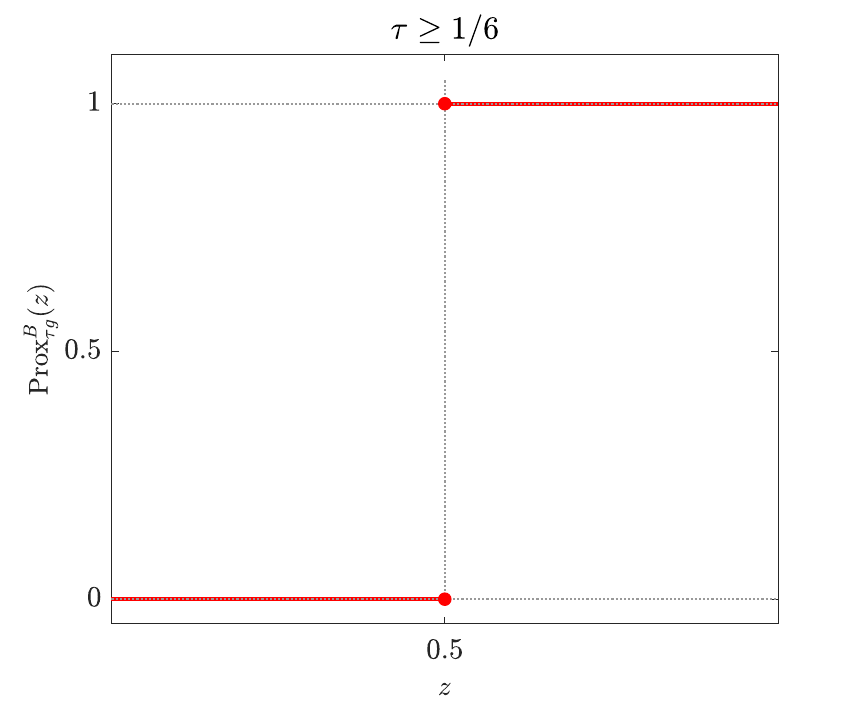}
	\end{subfigure}
		\caption{The graph of ${\rm Prox}_{\tau g}^B(z)$} \label{fig:Prox-cubic}
\end{figure}
\begin{lemma}\label{lemma-proximal-closed}
For any $z \in \R^n$ and $\tau > 0$,  proximal operator ${\rm Prox}_{\tau {g}}^{B}(z)$ takes the following forms.
\begin{itemize}[leftmargin=15pt]
\item If $\tau\geq 1/6$, then
\begin{equation}\label{eq-prox1}
{\rm Prox}_{\tau g}^B(z) = \left\{\begin{array}{ll}
  \left\{0\right\}, &z<{1/2},\\
  \{0, 1\}, &z= {1/2},\\ 
   \left\{1\right\},&z> {1/2}.
 \end{array}\right.
\end{equation}
\item  If $\tau \in(0, 1/6)$, then
\begin{equation}\label{eq-prox2}
{\rm Prox}_{\tau g}^B(z) =\left\{\begin{array}{ll}
  \left\{0\right\}, &z\leq 3\tau,\\[1ex]
  \left\{z_1^*\right\} , &z\in( 3\tau, {1/2}),\\[1ex] 
   \left\{ z_1^* ,  z_2^*  \right\}, &z= {1/2},\\[1ex] 
     \left\{z_2^*\right\}, &z\in ({1/2}, 1-3\tau),\\[1ex] 
    \left\{1\right\},&z\geq 1-3\tau,
 \end{array}\right.\end{equation}
where $z_1^* := 1 + \dfrac{\sqrt{1+12\tau(z-1)}-1}{6\tau}$ and $z_2^* := \dfrac{1-\sqrt{1-12\tau z}}{6\tau}$.
\end{itemize} \end{lemma}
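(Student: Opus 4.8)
The statement is a one–variable fact about $\varphi_z(x):=g(x)+\tfrac{1}{2\tau}(x-z)^2$ on $B=[0,1]$, and my plan is to minimize this directly, using a reflection symmetry of $g$ to halve the work. From \eqref{def-h-h} I first record the elementary facts I need: the two cubic branches of $g$ agree at $x=\tfrac12$ (common value $\tfrac78$), so $g$ is continuous on $[0,1]$; $g\ge0$ on $[0,1]$ with $g(0)=g(1)=0$; and, crucially, $g(x)=g(1-x)$ for all $x$. The last identity gives
\[
\varphi_z(1-x)=g(x)+\tfrac{1}{2\tau}\big((1-z)-x\big)^2=\varphi_{1-z}(x),
\qquad
\varphi_z(1-x)-\varphi_z(x)=\tfrac{1}{2\tau}(1-2z)(1-2x).
\]
The first equality yields ${\rm Prox}_{\tau g}^B(z)=1-{\rm Prox}_{\tau g}^B(1-z)$, so it suffices to treat $z\le\tfrac12$. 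The second shows $\varphi_z(1-x)\le\varphi_z(x)$ whenever $x\ge\tfrac12$ and $z\le\tfrac12$; hence the minimum of $\varphi_z$ over $[0,1]$ is attained in $[0,\tfrac12]$, and---once I check that $\tfrac12$ is never itself a minimizer when $z<\tfrac12$---attained only there, while for $z=\tfrac12$ the minimizer set is invariant under $x\mapsto1-x$.

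It remains to minimize, over $[0,\tfrac12]$, the cubic $\psi(x):=x^3-3x^2+3x+\tfrac{1}{2\tau}(x-z)^2$, with $\psi'(x)=3(x-1)^2+\tfrac1\tau(x-z)$ and $\psi''(x)=6(x-1)+\tfrac1\tau$. The dichotomy of the lemma is the sign of $\psi''$ on $[0,\tfrac12]$: since $\psi''$ increases and $\psi''(0)=\tfrac1\tau-6$, $\psi$ is strictly convex on $[0,\tfrac12]$ exactly when $\tau\le\tfrac16$. In that regime the minimizer is the projection onto $[0,\tfrac12]$ of the unique stationary point; solving $\psi'(x)=0$, i.e. $3\tau(x-1)^2+(x-z)=0$, via $u=x-1$ gives $x=1+\tfrac{-1\pm\sqrt{1+12\tau(z-1)}}{6\tau}$, and since the ``$-$'' root is $<0$ for $\tau<\tfrac16$, the relevant one is the ``$+$'' root $z_1^*$. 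From $\psi'(0)=3-z/\tau$ and $\psi'(\tfrac12)=\tfrac34+\tfrac{1/2-z}{\tau}$ one gets: $\psi$ increasing on $[0,\tfrac12]$ with unique minimizer $0$ when $z\le3\tau$, and $\psi'(0)<0<\psi'(\tfrac12)$ with unique interior minimizer $z_1^*\in(0,\tfrac12)$ when $z\in(3\tau,\tfrac12)$ (here $3\tau<\tfrac12$, and the discriminant $1+12\tau(z-1)$ is $\ge0$ on $[3\tau,\tfrac12]$ since it equals $(1-6\tau)^2$ at $z=3\tau$ and increases in $z$; note also $3\tau\ge1-\tfrac1{12\tau}$ by AM--GM). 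Inserting this into the reduction of the first paragraph, using the algebraic identity $1-z_1^*(1-z)=z_2^*(z)$ to transport the $z>\tfrac12$ cases through the reflection and $z_1^*+z_2^*=1$ at $z=\tfrac12$ to get the two–point set there, produces exactly \eqref{eq-prox2}.

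For $\tau\ge\tfrac16$, $\psi$ can fail to be convex on $[0,\tfrac12]$, so I instead show outright that the endpoint $0$ wins for $z\le\tfrac12$. Set $D:=\psi-\psi(0)$, so $D(0)=0$ and $D'=\psi'$. Now $\psi'$ is an upward parabola with global minimum at $x=1-\tfrac1{6\tau}\in[0,1)$ and minimum value $\tfrac{1-z}{\tau}-\tfrac1{12\tau^2}$, which is $\ge0$ because $z\le\tfrac12\le1-\tfrac1{12\tau}$ when $\tau\ge\tfrac16$. Hence $\psi'\ge0$ on $\R$, $D$ is nondecreasing, $D\ge0$ on $[0,\tfrac12]$, and $0$ is the unique minimizer once $z<\tfrac12$ (then $\psi'(0)=3-z/\tau>0$). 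With the first paragraph this gives ${\rm Prox}_{\tau g}^B(z)=\{0\}$ for $z<\tfrac12$, $\{1\}$ for $z>\tfrac12$, and $\{0,1\}$ for $z=\tfrac12$, i.e. \eqref{eq-prox1}; as a consistency check, \eqref{eq-prox1} and \eqref{eq-prox2} agree at $\tau=\tfrac16$.

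Almost all of this is mechanical (continuity and positivity of $g$, expanding squares, a few endpoint inequalities, the discriminant bound). The single idea that carries the argument is the reflection $g(x)=g(1-x)$: without it one would have to minimize the two distinct cubic branches of $\varphi_z$ on $[0,\tfrac12]$ and $[\tfrac12,1]$ separately and then compare their minimum values across the concave kink at $x=\tfrac12$---a comparison that becomes delicate near $z=\tfrac12$ and in the narrow parameter windows where the thresholds $3\tau$, $\tfrac12-\tfrac{3\tau}{4}$, $1-3\tau$ interleave. The symmetry collapses that whole comparison to the single sign condition $(1-2z)(1-2x)\le0$, after which only the single–variable, single–branch calculus above is needed; accordingly I expect the bookkeeping around the $z=\tfrac12$ tie, and verifying that the root formulas are consistent under $z\mapsto1-z$, to be the most error-prone points rather than any genuine obstacle.
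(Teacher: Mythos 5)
Your proposal is correct, and it takes a genuinely different route from the paper. The paper minimizes the two cubic branches separately, tabulating $g_1^*=\min_{[0,1/2]}g_1$ and $g_2^*=\min_{(1/2,1]}g_2$ over four regimes of $\tau$ (namely $(0,\tfrac16)$, $[\tfrac16,\tfrac29)$, $[\tfrac29,\tfrac13)$, $[\tfrac13,\infty)$) and then comparing the two minima case by case; essentially all of the work is deferred to that table and to the unstated cross-branch comparison. You instead observe the reflection symmetry $g(x)=g(1-x)$, which gives ${\rm Prox}_{\tau g}^B(z)=1-{\rm Prox}_{\tau g}^B(1-z)$ and the sign identity $\varphi_z(1-x)-\varphi_z(x)=\tfrac{1}{2\tau}(1-2x)(1-2z)$, thereby locating the minimizer in $[0,\tfrac12]$ whenever $z\le\tfrac12$ and eliminating the cross-branch comparison entirely. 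What remains is one-branch calculus with a single clean dichotomy at $\tau=\tfrac16$ (strict convexity of $\psi$ on $[0,\tfrac12]$ versus $\psi'\ge0$ globally), and I verified the supporting computations: the stationary-point formula yielding $z_1^*$, the discriminant bound $1+12\tau(z-1)\ge(1-6\tau)^2\ge0$ on $[3\tau,\tfrac12]$ via $3\tau+\tfrac{1}{12\tau}\ge1$, the transport identity $1-z_1^*(1-z)=z_2^*(z)$, and the minimum value $\tfrac{1-z}{\tau}-\tfrac{1}{12\tau^2}\ge0$ of $\psi'$ when $\tau\ge\tfrac16$ and $z\le\tfrac12$. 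Your argument is shorter, fully explicit where the paper's is tabular, and needs only two $\tau$-regimes instead of four; the paper's approach buys nothing here except avoiding the (one-line) verification of the symmetry. The only points requiring care in your write-up — strictness of the reflection inequality for $z<\tfrac12$ so that no minimizer hides in $(\tfrac12,1]$, and uniqueness of the endpoint minimizer when $\psi'(0)=0$ (e.g.\ $z=3\tau$, or $z=\tfrac12$ with $\tau=\tfrac16$, where $\psi'(x)=3x^2$) — all check out.
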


\begin{proof}
	For a given $z \in \mathbb{R}$ and $\eta > 0$, denote $g^* := \operatorname*{min}\limits_{x \in B} g(x) + \dfrac{(x - z)^2}{2\tau}$ and
	\begin{equation*}
		\begin{aligned}
			g_1(x):=&x^3-3x^2+3x+\frac{(x - z)^2}{2\tau},\\
			g_2(x):=&1-x^3+\frac{(x - z)^2}{2\tau}.
		\end{aligned}
	\end{equation*}
	To derive $g^*$, we need to compare the following two optimal objective function values, namely, 
	$$
	g_1^*:=\min_{x \in [0,\frac{1}{2}]} g_1(x), \qquad g_2^*:=\min_{x \in (\frac{1}{2},1]} g_2(x) .
	$$
	Direct calculation enables us to obtain their values under different $\tau$, which are summarized in Table \ref{tab-compare-optvalue}, where $\tau_1:={1/2}-{3\tau/4}$ and $\tau_2:={1/2}+{3\tau/4}$. 	Finally, a comparison of $g_1^*$ and $g_2^*$ in different cases yields \eqref{eq-prox1} and \eqref{eq-prox2}.
\end{proof}
	
	\begin{table}[h]
		\renewcommand{\arraystretch}{1.10}\addtolength{\tabcolsep}{2pt}
		\centering
		\caption{The optimal function values of $g_1$ and $g_2$.}
		\begin{tabular}{cll} 
			\hline
			$\tau$ & \multicolumn{1}{c}{$g_1^*$}  &\multicolumn{1}{c}{$g_2^*$}\\ 
			\hline
			$0<\tau< {1}/{6}$ & $\begin{cases} g_1(0), & z < 3\tau \\ g_1(z_1^*), & z \in [3\tau,\tau_2) \\ g_1({1/2}),\hspace{20mm} & z \ge \tau_2 \end{cases}$ & $\begin{cases} g_2({1/2}),\hspace{20mm} & z < \tau_1 \\ g_2(z_2^*), & z \in [\tau_1,1-3\tau) \\ g_2(1), & z \ge 1-3\tau \end{cases}$ \\
			\hline
			$ {1}/{6}\le\tau< {2}/{9}$ & $\begin{cases} \text{min}\{g_1(0),g_1(z_1^*)\},~ & z < 3\tau \\ g_1(z_1^*), & z \in [3\tau,\tau_2) \\ g_1({1/2}), & z \ge \tau_2 \end{cases}$ & $\begin{cases} g_2({1/2}), & z < \tau_1 \\ g_2(z_2^*), & z \in [\tau_1,1-3\tau) \\ \text{min}\{g_2(1),g_1(z_2^*)\},~ & z \ge 1-3\tau \end{cases}$ \\
			\hline
			$ {2}/{9}\le\tau< {1}/{3}$ & $\begin{cases} \text{min}\{g_1(0),g_1(z_1^*)\}, & z < \tau_2 \\ \text{min}\{g_1(0),g_1({1/2})\}, & z \in [\tau_2,3\tau) \\ g_1({1/2}), & z \ge 3\tau \end{cases}$ & $\begin{cases} g_2({1/2}), & z < 1-3\tau \\ \text{min}\{g_2({1/2}),g_2(1)\}, & z \in [1-3\tau,\tau_1) \\ \text{min}\{g_2(z_2^*),g_2(1)\}, & z \ge \tau_2 \end{cases}$ \\
			\hline
			$\tau\ge {1}/{3}$ & $\begin{cases} g_1(0), & z < \tau_2 \\ \text{min}\{g_1(0),g_1({1/2})\}, & z \in [\tau_2,3\tau) \\ g_1({1/2}), & z \ge 3\tau \end{cases}$ & $\begin{cases} g_2({1/2}), & z < 1-3\tau \\ \text{min}\{g_2({1/2}),g_2(1)\}, & z \in [1-3\tau,\tau_1) \\ g_2(1), & z \ge \tau_2 \end{cases}$ \\ 
			\hline
		\end{tabular}
		\label{tab-compare-optvalue}
	\end{table}

\section{Exact Penalty Theory} \label{Section-model}
Instead of solving problem \eqref{SCO}, we focus on solving its penalty formulation,
\begin{equation}
\label{pi-penalty}
\min_{\x\in \B}~F(\x;{\lambda}):=f(\x)+{\lambda} {p}(\x), \qquad \text{with}~~ {\lambda}\geq\overline{{\lambda}}  ~~\text{and}~~{p}(\x):=\sum_{i=1}^n g(x_i),
\end{equation} 
where $ \overline{{\lambda}}$ is defined by 
\begin{equation}\label{lower-bd-pi}
 \overline{{\lambda}} :=\max_{\x\in\B}\frac{\|\nabla f(\x)\|_\infty}{3}.
\end{equation}
Since $f$ is continuously differentiable and $\B$ is bounded, $\nabla f$ is continuous on $\B$, which implies that $\overline{{\lambda}}$ is well-defined and finite. In the sequel, we show that \eqref{pi-penalty} is an exact penalty model of \eqref{SCO}.

To derive the exact penalty theory, we define a P-stationary point associated with the proximal operator of ${p}$, which also plays a crucial role in our algorithm design.
\begin{definition}
Point $\overline{\x}$ is called a P-stationary point of (\ref{pi-penalty}) if there is a $\tau>0$ such that
\begin{equation}\label{Pstationary}
\begin{aligned}
\overline{\x} &\in {\rm Prox}_{\tau{\lambda} {p}}^\B\Big(\overline{\x} - \tau \nabla f(\overline{\x})\Big),\\
&=  {\rm argmin}_{\z\in\B}~\frac{1}{2}\| \z -(\overline{\x}- \tau \nabla f(\overline{\x}))\|^2+\tau{\lambda} {p}(\z).
\end{aligned}\end{equation} 
\end{definition}
By Lemma \ref{lemma-proximal-closed}, we can directly obtain the closed-form of ${\rm Prox}_{\tau{\lambda} {p}}^\B$, namely,
$${\rm Prox}_{\tau{\lambda} {p}}^\B(\z)=\{\v\in\R^n:v_i\in{\rm Prox}_{\tau{\lambda} {g}}^B(z_i), i\in[n]\},$$ which allows us to design a fast numerical algorithm given in the next section. The following result establishes the relationships among P-stationary points and local minimizers of problem (\ref{pi-penalty}).

 \begin{theorem}\label{firstPST} The following relationships hold for problem (\ref{pi-penalty}).  
\begin{itemize}[leftmargin=16pt]
\item[1)] Any P-stationary point is binary. 
\item[2)] A local minimizer is a P-stationary point if $f$ is $L$-strongly smooth on $\B$. 
\item[3)] A  P-stationary point with $\tau\geq1/\ell$ is a global minimizer if $f$ is $\ell$-strongly convex on $\B$. 
\end{itemize} 
\end{theorem}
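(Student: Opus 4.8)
The plan is to treat the three items in turn, each time reducing to the separable proximal subproblem behind P-stationarity: $\overline{\x}$ is P-stationary with parameter $\tau$ precisely when, writing $z_i:=\overline{x}_i-\tau\nabla_i f(\overline{\x})$, every coordinate $\overline{x}_i$ globally minimizes $q_i(z):=\tau\lambda g(z)+\tfrac12(z-z_i)^2$ over $z\in B$. For item 1) I would argue by contradiction: suppose $\overline{x}_i\in(0,1)$ for some $i$. Then $N_B(\overline{x}_i)=\{0\}$, so Fermat's rule for the minimization of $q_i$ over $B$ gives $\tau\lambda\nu+(\overline{x}_i-z_i)=0$ for some $\nu\in\partial g(\overline{x}_i)$; substituting $z_i-\overline{x}_i=-\tau\nabla_i f(\overline{\x})$ and dividing by $\tau$ yields $\nabla_i f(\overline{\x})=-\lambda\nu$, hence, by \eqref{lowerbound} and the definition of $\overline{\lambda}$,
\[
|\nabla_i f(\overline{\x})|=\lambda|\nu|\ \ge\ 3\lambda\ \ge\ 3\overline{\lambda}\ \ge\ \|\nabla f(\overline{\x})\|_\infty\ \ge\ |\nabla_i f(\overline{\x})|,
\]
so every inequality is an equality; in particular $|\nu|=3$, which by \eqref{g-subdiff} forces $\overline{x}_i\in\{0,1\}$ — a contradiction. (Equivalently one may use Lemma~\ref{lemma-proximal-closed}: for $\tau\lambda\ge1/6$ the proximal set is already $\subseteq\{0,1\}$, and for $\tau\lambda\in(0,1/6)$ the only non-binary candidates $z_1^\ast,z_2^\ast$ are interior critical points of $q_i$, excluded by the same estimate.) Thus $\overline{\x}\in\{0,1\}^n$.

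For item 2), let $\overline{\x}$ be a local minimizer of \eqref{pi-penalty}, fix any $\tau\in(0,1/L]$, and choose $\widehat{\x}\in{\rm Prox}_{\tau\lambda p}^{\B}(\overline{\x}-\tau\nabla f(\overline{\x}))$ (nonempty, since its objective is continuous on the compact set $\B$). By $L$-strong smoothness and $\tau\le1/L$,
\[
F(\widehat{\x};\lambda)\ \le\ f(\overline{\x})+\langle\nabla f(\overline{\x}),\widehat{\x}-\overline{\x}\rangle+\tfrac{1}{2\tau}\|\widehat{\x}-\overline{\x}\|^2+\lambda p(\widehat{\x}),
\]
and the right-hand side, as a function of $\widehat{\x}$, equals an additive constant plus $\tfrac1\tau$ times the proximal objective, so it does not exceed its value at $\overline{\x}$, namely $f(\overline{\x})+\lambda p(\overline{\x})=F(\overline{\x};\lambda)$. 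Hence $F(\widehat{\x};\lambda)\le F(\overline{\x};\lambda)$, and rearranging the same inequality together with $p(\widehat{\x})\ge0$ and $\|\widehat{\x}-\overline{\x}\|\le\sqrt n$ gives $\tfrac{1}{2\tau}\|\widehat{\x}-\overline{\x}\|^2\le\lambda p(\overline{\x})+\sqrt n\,\|\nabla f(\overline{\x})\|$, so $\|\widehat{\x}-\overline{\x}\|\to0$ as $\tau\to0$. Taking $\tau$ small enough that $\widehat{\x}$ lies in the neighbourhood on which $\overline{\x}$ is optimal forces $F(\widehat{\x};\lambda)\ge F(\overline{\x};\lambda)$, hence $F(\widehat{\x};\lambda)=F(\overline{\x};\lambda)$ and every inequality above is tight; in particular $\overline{\x}$ also attains the proximal minimum, i.e. $\overline{\x}\in{\rm Prox}_{\tau\lambda p}^{\B}(\overline{\x}-\tau\nabla f(\overline{\x}))$, so $\overline{\x}$ is P-stationary.

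For item 3), let $\overline{\x}$ be P-stationary with $\tau\ge1/\ell$. By item 1) it is binary, so $p(\overline{\x})=0$ and $F(\overline{\x};\lambda)=f(\overline{\x})$. Since $\overline{\x}$ minimizes $\z\mapsto\tau\lambda p(\z)+\tfrac12\|\z-\overline{\x}+\tau\nabla f(\overline{\x})\|^2$ over $\B$, comparing its value at $\overline{\x}$ with that at a general $\z\in\B$, cancelling $\tfrac{\tau^2}{2}\|\nabla f(\overline{\x})\|^2$ and dividing by $\tau$, yields
\[
\langle\nabla f(\overline{\x}),\z-\overline{\x}\rangle\ \ge\ -\lambda p(\z)-\tfrac{1}{2\tau}\|\z-\overline{\x}\|^2\qquad\text{for all }\z\in\B.
\]
Substituting this into $\ell$-strong convexity, $f(\x)\ge f(\overline{\x})+\langle\nabla f(\overline{\x}),\x-\overline{\x}\rangle+\tfrac{\ell}{2}\|\x-\overline{\x}\|^2$, and using $\tfrac{1}{2\tau}\le\tfrac{\ell}{2}$, the quadratic terms combine nonnegatively and leave $f(\x)\ge f(\overline{\x})-\lambda p(\x)$, i.e. $F(\x;\lambda)\ge F(\overline{\x};\lambda)$ for every $\x\in\B$; thus $\overline{\x}$ is a global minimizer of \eqref{pi-penalty}.

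The main obstacle is item 2): a priori the proximal map could move $\overline{\x}$ to a far vertex of $\B$, and the only reason it cannot — for small $\tau$ — is the $L$-strong smoothness, which makes the proximal objective a local majorant of $F$ and supplies the estimate $\|\widehat{\x}-\overline{\x}\|^2=O(\tau)$; a bare first-order argument would recover only the Fermat condition of \eqref{pi-penalty}, not the stronger proximal fixed-point identity. Items 1) and 3) amount to the two short computations above, the lone subtlety being the equality analysis of the subdifferential estimate closing item 1).
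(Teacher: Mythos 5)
Your three arguments follow essentially the same route as the paper's: part 1) via Fermat's rule on the coordinatewise prox subproblem together with \eqref{lowerbound} and the definition of $\overline{\lambda}$; part 2) via a small-$\tau$ estimate $\|\widehat{\x}-\overline{\x}\|^2=O(\tau)$ from the prox inequality, followed by the $L$-smooth majorization and local optimality; part 3) via the prox inequality tested at an arbitrary competitor in $\B$ combined with $\ell$-strong convexity and $\tau\ge1/\ell$. Parts 2) and 3) are sound, and your part 2) is in fact marginally cleaner than the paper's: you carry the term $\lambda p(\overline{\x})$ through the estimate instead of first proving the local minimizer is binary, and you close by tightness of the majorization chain rather than by forcing $\widehat{\x}=\overline{\x}$.

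The step that deserves scrutiny is in part 1), and it is shared with the paper: the inequality $\lambda|\nu|\ge 3\lambda$ rests on \eqref{lowerbound}, but \eqref{lowerbound} is inconsistent with \eqref{g-subdiff}. From \eqref{g-subdiff}, for $x\in(0,1/2)$ one has $\nu=3(1-x)^2\in(3/4,3)$, at $x=1/2$ one has $|\nu|=3/4$, and for $x\in(1/2,1)$ one has $|\nu|=3x^2\in(3/4,3)$; that is, $|\nu|<3$ at every interior point, with $|\nu|=3$ attained only at $x\in\{0,1\}$. Consequently your chain $|\nabla_i f(\overline{\x})|=\lambda|\nu|\ge3\lambda\ge3\overline{\lambda}\ge\|\nabla f(\overline{\x})\|_\infty$ does not get off the ground, and the first-order condition $\nabla_if(\overline{\x})=-\lambda\nu$ with $|\nu|<3$ is perfectly compatible with $\lambda\ge\overline{\lambda}$ (for instance, with $f(x)=-3x$ one has $\overline{\lambda}=1$, and for $\lambda=2$ the interior point $\overline{x}=1-1/\sqrt{2}$ passes the fixed-point test of Lemma \ref{lemma-proximal-closed} for small $\tau$). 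Your equality analysis at $|\nu|=3$ is more careful than the paper's outright claim $|\nu_i|>3$, but both versions need \eqref{lowerbound} to hold as stated; as it stands, neither argument closes part 1). The gap propagates only mildly to your part 3), where the appeal to part 1) can be avoided by retaining $p(\overline{\x})$ in the inequality, exactly as the paper does.
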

\proof  1) Suppose $\tilde{\x} \notin \{0,1\}^n$ is a P-stationary point of (\ref{pi-penalty}). Then, there exists a $\tau>0$ such that
\begin{equation}
\tilde{\x} \in {\rm Prox}_{\tau{\lambda} {p}}^\B\Big(\tilde{\x} - \tau \nabla f(\tilde{\x})\Big),
\end{equation} 
According to the generalized Fermat's rule \cite[Theorem 10.1(P422)]{rock98}, it follows that 
\begin{equation}\label{opt-P-sta}
	\begin{aligned}
		0&\in \partial\left(\frac{1}{2}\|\tilde{\x} -(\tilde{\x}- \tau \nabla f(\tilde{\x}))\|^2 + \tau{\lambda}  {p}(\tilde{\x}) + \delta_\B(\overline{\x})\right)\\
		&=  \tau \nabla f(\tilde{\x}) + \partial\left( \tau{\lambda}  {p}(\tilde{\x}) + \delta_\B(\tilde{\x})\right)\\[1.5ex]
		&=    \tau \nabla f(\tilde{\x})  +  \tau{\lambda}  \partial  {p}(\tilde{\x}) + N_\B(\tilde{\x}),
\end{aligned}
\end{equation} 
where the first equation are from three facts: 1) $f$ is continuously differentiable, 2) ${p}(\widetilde{\x})$ is lower semi-continuous and bounded,  and  3) \cite[10.10 Exercise]{rock98}, the second equation holds because the unique nondifferentiable point of $g$ is $1/2$, which lies in the interior of $[0,1]$.

Since $\tilde{\x} \notin \{0,1\}^n$, there is an ${\widetilde{x}_i \in (0,1)}$. Then \eqref{opt-P-sta} implies that there is ${\nu_i\in \partial g(\tilde{x}_i)}$ such that $\nabla_i f (\widetilde{\x})= -{\lambda}\nu_i$.  Moreover, from \eqref{lowerbound}, we have $|\nu_i|> 3$ for any  ${\nu_i\in \partial g(\tilde{x}_i)}$, which yields
\begin{equation} \label{opt-contradiction}
	\overline{{\lambda}}  =   \max_{\x\in\B}\frac{\|\nabla f(\x)\|_\infty}{3}  \geq   \frac{|\nabla_i f(\widetilde{\x})|}{3}   =       \frac{\lambda|\nu_i|}{3}  >  \lambda \geq  \overline{{\lambda}},
\end{equation}
  leading to a contradiction. Therefore, any P-stationary point must be binary.

2) Let $\x$ be a local minimizer of \eqref{pi-penalty} and $\z$ satisfy $\z \in {\rm Prox}_{\tau{\lambda} {p}}^\B\Big(\x - \tau \nabla f(\x)\Big)$. By Fermat’s rule \cite[Theorem 10.1(P422)]{rock98}, it holds that
\begin{eqnarray} \label{opt-local}
	\begin{aligned}
		0 &\in \partial (f(\widetilde{\x})+{\lambda} {p}(\widetilde{\x}) + \delta_\B(\widetilde{\x}))\\
		&= \nabla f(\widetilde{\x})+{\lambda} \partial  {p}(\widetilde{\x}) + N_\B(\widetilde{\x}),
	\end{aligned}
\end{eqnarray}
where the equality holds for the same reason as in the previous part. Similar to the argument in \eqref{opt-contradiction}, it follows from \eqref{opt-local} that $\x$ is binary, and hence $p(\x)=0$. The definition of ${\rm Prox}_{\tau{\lambda} {p}}^\B$ yields
\begin{equation*} 
	\begin{aligned}
		\| \z -({\x}- \tau \nabla f({\x}))\|^2+2\tau{\lambda} {p}(\z) \leq  \| \x -({\x}- \tau \nabla f({\x}))\|^2+2\tau{\lambda} {p}(\x), 
\end{aligned}\end{equation*} 
which results in 
\begin{equation} \label{cond-P-sta}
\begin{aligned}
 \| \z - {\x} \|^2 &\leq 2\tau\langle {\x}-\z , \nabla f({\x}) \rangle  +  2\tau{\lambda} ({p}(\x)-{p}(\z))\\
 &\leq 2\tau\langle {\x}-\z , \nabla f({\x}) \rangle   \\
 &\leq 2\tau\|\z- {\x}\|\| \nabla f({\x})\|   \\
  &\leq 2\tau\sqrt{n}\|\z- {\x}\|\| \nabla f({\x})\|_{\infty}   \\
   &\leq 6\tau \overline{{\lambda}}\sqrt{n}\|\z- {\x}\|,
\end{aligned}\end{equation} 
where the last inequality is from \eqref{lower-bd-pi}. This implies $\| \z - {\x} \|\leq 2\tau c\overline{{\lambda}}\sqrt{n}$, meaning $\z$ is around ${\x}$ when $\tau$ is sufficiently small. 
Using the $L$-strong smoothness of $f$ on $\B$ and the first inequality in \eqref{cond-P-sta}, we obtain
 \begin{equation*} 
\begin{aligned}
&~2F(\z;{\lambda})- 2F(\x;{\lambda})\\
\leq &~2\langle \z - {\x}, \nabla f({\x}) \rangle  + 2{\lambda} ({p}(\z)-{p}(\x))+L\| \z - {\x} \|^2\\
\leq&~\left({L}-{1}/{\tau }\right)\| \z - {\x} \|^2 \leq 0,
\end{aligned}\end{equation*}
where the last inequality is because $\tau$ is sufficiently small.
The above condition leads to  $\z = {\x} $ due to the local optimality of $\x$, namely, $\x$ is a P-stationary point.

3) Let $\overline{\x}$ be a P-stationary point.  Then it satisfies
\begin{equation*} 
\begin{aligned}
 \overline{\x} \in {\rm Prox}_{\tau{\lambda} {p}}^\B\Big(\overline{\x} - \tau \nabla f(\overline{\x})\Big).
\end{aligned}\end{equation*} 
By the definition of ${\rm Prox}_{\tau{\lambda} {p}}^\B$, we obtain
\begin{equation*} 
\begin{aligned}
 \| \overline{\x} -(\overline{\x}- \tau \nabla f(\overline{\x}))\|^2+2\tau{\lambda} {p}(\overline{\x}) \leq  \| \w -(\overline{\x}- \tau \nabla f(\overline{\x}))\|^2+ 2\tau{\lambda} {p}(\w), 
\end{aligned}\end{equation*} 
for any $\w\in\B$,  which results in 
\begin{equation}  
\begin{aligned}
2\langle \overline{\x} - {\w}, \nabla f(\overline{\x}) \rangle  + 2{\lambda} ({p}(\overline{\x})-{p}(\w))  &\leq  ( {1}/{\tau })\| \w - \overline{\x} \|^2. 
\end{aligned}\end{equation} 
Using the above condition and the $\ell$-strong convexity of $f$, we have
 \begin{equation*} 
\begin{aligned}
&~2F(\overline{\x};{\lambda}) - 2F(\w;{\lambda})\\
\leq  &~2\langle \overline{\x} - {\w}, \nabla f(\overline{\x}) \rangle  + 2{\lambda} ({p}(\x)-{p}(\w))- \ell\| \w - \overline{\x} \|^2\\
\leq &~\left({1}/{\tau }-{\ell}\right)\| \z - \overline{\x} \|^2 \leq 0,
\end{aligned}\end{equation*}
due to $\tau\geq1/\ell$. Therefore, $\overline{\x}$ is global minimizer of \eqref{pi-penalty}. 
\qed

To highlight the advantages of our proposed penalty function $g(x)$ over existing alternatives, we provide a comparative example. Specifically, we contrast our piecewise cubic penalty with the $\ell_p$-norm based penalty $h(x)=n-\|2\mathbf{x}-\mathbf{1}\|_p^p$ from \cite{Wu19}.

\begin{example}
	\label{ex:comparison}
	Consider the following two single-variable penalty problems:
	\begin{eqnarray}
		\label{ex-pi-penalty-1}&&\min_{x\in [0,1]}~ F_1(x;{\lambda})=\frac{1}{2}\left(x- \frac{1}{2}\right)^2+{\lambda}(1-|2x-1|^3),\\
		\label{ex-pi-penalty-2}&&\min_{x\in [0,1]}~ F_2(x;{\lambda})=\frac{1}{2}\left(x- \frac{1}{2}\right)^2+{\lambda} g(x).
	\end{eqnarray} 
	
	\begin{itemize}[leftmargin=15pt]
		\item \textbf{Problem (\ref{ex-pi-penalty-1}) with $\ell_p$-norm penalty:} For any ${\lambda}>0$, the non-binary point $x=1/2$ is always a local minimizer. To verify this, note that $\partial F_1({1}/{2};{\lambda})=\{0\}$ for all ${\lambda}$. Furthermore, for sufficiently small $|\varepsilon|$:
		\begin{eqnarray*}
			\begin{aligned} 
				F_1\left(\frac{1}{2}+\varepsilon;{\lambda}\right)-F_1\left(\frac{1}{2};{\lambda}\right)&= \frac{1}{2}\varepsilon^2+{\lambda} \left(1-8|\varepsilon|^3\right) - \lambda\\
				&=  \left(\frac{1}{2|\varepsilon|}-8{\lambda}\right) |\varepsilon|^3 \geq 0 ,
		\end{aligned}\end{eqnarray*}
		confirming that $x=1/2$ is indeed a local minimizer regardless of the penalty parameter value.
		
		\item \textbf{Problem (\ref{ex-pi-penalty-2}) with our piecewise cubic penalty:} When ${\lambda}>\overline{\lambda}=1/6$, all local minimizers are binary. To see this, observe that
		\begin{equation*}
			0\notin  \partial F_2(x;{\lambda}) =  x- \frac{1}{2} +{\lambda}\begin{cases}
				\{3x^2-6x+3\},& x \in(0,1/2),\\
				[-3/4,3/4],& x =1/2,\\
				\{-3x^2\},& x \in(1/2,1).
			\end{cases}
		\end{equation*} 
		Hence, Problem (\ref{ex-pi-penalty-2}) has no local minimizers in $(0,1)$. The binary points $x=0$ and $x=1$ are the only local (and global) minimizers.
	\end{itemize}
\end{example}

This example reveals a key distinction: penalties like $h(x)$ may admit non-binary local minimizers regardless of the penalty parameter, whereas  $g(x)$ ensures all local minimizers are binary when ${\lambda}$ exceeds a computable threshold.

Based on Theorem \ref{firstPST}, we establish the exact penalty theorem for problems \eqref{SCO} and \eqref{pi-penalty}. 
\begin{theorem}[Exact Penalty Theorem]\label{theorem-ept} A point is a global minimizer of  problem (\ref{SCO})  if and only if it is a global minimizer of problem (\ref{pi-penalty}).
\end{theorem}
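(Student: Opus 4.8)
The plan is to deduce the coincidence of the two sets of global minimizers from two ingredients, neither of which needs the $L$-strong smoothness hypothesis used in \Cref{firstPST}(2). First I would record the structure of the feasible set of \eqref{SCO}: on $[0,1/2]$ one has $g(x)=x\bigl((x-\tfrac{3}{2})^2+\tfrac{3}{4}\bigr)\ge 0$ with equality only at $x=0$, and on $(1/2,1]$ one has $g(x)=1-x^3\ge 0$ with equality only at $x=1$; hence $g\ge 0$ on $B$ and $g(x)=0\iff x\in\{0,1\}$. Consequently the constraint $\{g(x_i)=0,\ x_i\in B,\ i\in[n]\}$ is exactly $\{0,1\}^n$, the penalty term $p(\x)=\sum_i g(x_i)$ is nonnegative on $\B$ and vanishes precisely on $\{0,1\}^n$, and $F(\x;\lambda)=f(\x)$ whenever $\x$ is feasible for \eqref{SCO}. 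Second, I would isolate as a standalone claim the fact that every local (hence every global) minimizer of \eqref{pi-penalty} is binary. This is already established inside the proof of \Cref{firstPST}: applying the generalized Fermat rule coordinatewise to $f+\lambda p+\delta_\B$ gives $0\in\nabla_i f(\x)+\lambda\,\partial g(x_i)$ for every index $i$ with $x_i\in(0,1)$, and combining \eqref{lowerbound} with $\lambda\ge\overline{\lambda}=\max_{\x\in\B}\|\nabla f(\x)\|_\infty/3$ (cf. \eqref{opt-contradiction}) contradicts the definition of $\overline{\lambda}$; thus no coordinate of a local minimizer can lie in $(0,1)$.

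With these in hand the theorem follows by a short comparison. For the ``if'' direction, let $\x^\ast$ be a global minimizer of \eqref{pi-penalty}. By the second ingredient $\x^\ast\in\{0,1\}^n$, so $\x^\ast$ is feasible for \eqref{SCO} and $F(\x^\ast;\lambda)=f(\x^\ast)$; and for any $\x$ feasible for \eqref{SCO} we have $\x\in\B$ and $p(\x)=0$, whence $f(\x)=F(\x;\lambda)\ge F(\x^\ast;\lambda)=f(\x^\ast)$, so $\x^\ast$ solves \eqref{SCO}. For the ``only if'' direction, let $\x^\ast$ be a global minimizer of \eqref{SCO}, so $\x^\ast\in\{0,1\}^n\subseteq\B$ with $F(\x^\ast;\lambda)=f(\x^\ast)$. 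Since $F(\cdot;\lambda)$ is continuous on the compact set $\B$, it attains its minimum over $\B$ at some $\x^\sharp$, and by the second ingredient $\x^\sharp\in\{0,1\}^n$, so $\x^\sharp$ is feasible for \eqref{SCO} and $F(\x^\sharp;\lambda)=f(\x^\sharp)$. Then for every $\x\in\B$,
\[
F(\x;\lambda)\ \ge\ F(\x^\sharp;\lambda)\ =\ f(\x^\sharp)\ \ge\ f(\x^\ast)\ =\ F(\x^\ast;\lambda),
\]
where the middle inequality holds because $\x^\ast$ is a global minimizer of \eqref{SCO} and $\x^\sharp$ is feasible for it; hence $\x^\ast$ is a global minimizer of \eqref{pi-penalty}.

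The genuinely substantive step is the second ingredient --- that the penalty cannot manufacture a fractional minimizer --- but this is not new work here: it is exactly the binariness argument already carried out en route to \Cref{firstPST}, and it rests only on $f\in C^1$ together with the solution-independent bound $\overline{\lambda}$. The one point to be careful about is to obtain existence of a minimizer of \eqref{pi-penalty} from continuity of $F(\cdot;\lambda)$ on the compact box $\B$, so that the statement does not silently inherit the strong-smoothness assumption of \Cref{firstPST}(2); everything else is bookkeeping with the identity $F(\x;\lambda)=f(\x)$ on $\{0,1\}^n$.
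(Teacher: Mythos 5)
Your proposal is correct and follows essentially the same route as the paper: the key step in both is that every (local, hence global) minimizer of \eqref{pi-penalty} is binary, obtained from the Fermat-rule condition \eqref{opt-local} together with the bound \eqref{lowerbound} and the definition of $\overline{\lambda}$, after which the equivalence reduces to comparing objective values using $p=0$ on $\{0,1\}^n$. Your version is marginally more careful in making explicit the nonnegativity of $g$ on $B$ and the Weierstrass existence of a minimizer of $F(\cdot;\lambda)$ over the compact box, but these are bookkeeping refinements of the same argument rather than a different proof.
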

\proof  

Let $\x^*$ and $\widetilde{\x}$ be global minimizer of problems \eqref{SCO} and \eqref{pi-penalty}. Similar to the reasoning in the proof of part 3) of Theorem \ref{firstPST}, any global minimizer of \eqref{pi-penalty} satisfies the optimality condition \eqref{opt-local} and thus $\widetilde{\x}\in \{0,1\}^n$.   Note that the feasible region of \eqref{SCO} is $ \{0,1\}^n$. Since $\widetilde{\x}\in \{0,1\}^n$, we have $f(\x^*)\leq f(\widetilde{\x})$ by the global optimality of $\x^*$ for problem \eqref{SCO}. This yields   
\begin{equation*}
	F(\widetilde{\x};{\lambda})  \leq  F({\x}^*;{\lambda})  
	=
	f(\x^*)\leq f(\widetilde{\x}) 
	=   F(\widetilde{\x};{\lambda}),
\end{equation*} 
where the first inequality holds because $\widetilde{\x}$ is a global minimizer of problem \eqref{pi-penalty} and ${\x}^*\in\{0,1\}^n \subseteq\B$, the two equations hold due to ${p}({\x}^*)={p}(\widetilde{\x})=0$. The above condition implies the four values are the same, concluding the conclusion.  \qed

\begin{remark}
	We make a few comments on Theorem \ref{firstPST} and \ref{theorem-ept}.

Based on Theorems \ref{firstPST}$-$\ref{theorem-ept}, we summarize the relationships
		among different solutions to problems (\ref{UBIP}), (\ref{SCO}), and (\ref{pi-penalty}), as shown in Figure \ref{fig:relation}. Specifically, the global minimizers of these three problems are equivalent;  a global minimizer $\x$ of problem (\ref{pi-penalty})  is a P-stationary point if  $f$ is strongly smooth on $\B$; a P-stationary point of problem (\ref{pi-penalty}) is also a global minimizer if $f$ is strongly convex on $\B$. These relationships indicate that finding a P-stationary point of problem \eqref{pi-penalty} provides an effective approach for solving problem \eqref{SCO} or \eqref{UBIP}.

		 Theorem \ref{theorem-ept} shows that when the penalty parameter $\lambda>\overline{\lambda}$, the global minimizers of the penalty model (\ref{pi-penalty}) and the original problem (\ref{SCO}) coincide. Recall that $\overline{\lambda}$ defined in (\ref{lower-bd-pi}) is a constant independent of the solution set of (\ref{SCO}), which distinguishes Theorem \ref{theorem-ept} from traditional exact penalty theory (e.g., \cite[Theorem 17.3]{Nocedal99}), where the penalty parameter threshold typically depends on the solution to the original problem. Additionally, a result similar to Theorem \ref{theorem-ept} was established in \cite{Yuan17}, but under the assumption that $f$ is Lipschitz continuous and convex on $\B$. However, deriving Theorem \ref{theorem-ept} requires no additional conditions on $f$.

\end{remark}

 \begin{figure}[!t]
 \centering
 \includegraphics[scale=.55]{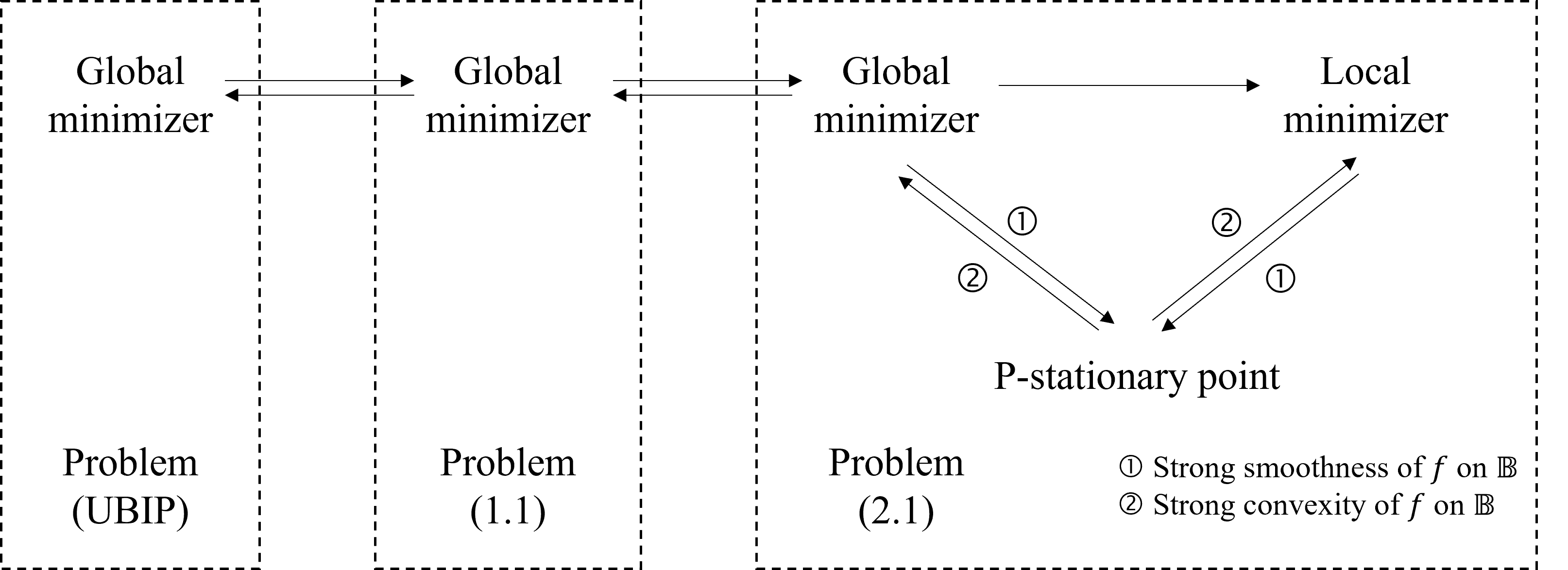}
 \caption{Relationships among different points for problems \eqref{UBIP} and \eqref{pi-penalty}.\label{fig:relation}}
 \end{figure}

To end this section, we present a result to guarantee a binary solution to  ${\rm Prox}_{ \tau{\lambda} {p}}^{\B}$. 
\begin{lemma}\label{P-binary} Let $\x\in\B$ and $\z\in {\rm Prox}_{ \tau{\lambda} {p}}^{\B}(\x - \tau \nabla f(\x ))$. Then $\z\in\{0,1\}^n$ when ${\lambda}\geq\overline{{\lambda}} + 1/(3\tau)$.
\end{lemma}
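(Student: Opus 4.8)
The plan is to mimic the optimality–condition argument used in part~1) of Theorem~\ref{firstPST}, now applied to the point $\z$ produced by the proximal operator rather than to a P-stationary point. Write $\z_0 := \x - \tau\nabla f(\x)$, so that $\z$ minimizes $\tfrac{1}{2}\|\cdot-\z_0\|^2 + \tau{\lambda}\, {p}(\cdot) + \delta_\B(\cdot)$ over $\R^n$. Applying the generalized Fermat rule \cite[Theorem 10.1]{rock98} together with the subdifferential sum rule (valid here for exactly the three reasons already spelled out after \eqref{opt-P-sta}: $\tfrac12\|\cdot-\z_0\|^2$ is smooth, ${p}$ is lower semicontinuous and bounded on $\B$, and the only nonsmooth point $1/2$ of each $g(x_i)$ lies in the interior of $[0,1]$), I would obtain
\begin{equation*}
0 \in \z - \z_0 + \tau{\lambda}\,\partial {p}(\z) + N_\B(\z),
\end{equation*}
which componentwise reads $(\z_0)_i - z_i \in \tau{\lambda}\,\partial g(z_i) + N_B(z_i)$ for every $i\in[n]$.

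Next I would argue by contradiction. Suppose $z_i \in (0,1)$ for some $i$. Then $N_B(z_i)=\{0\}$ by \eqref{normal-cone-B}, so there is a $\nu_i \in \partial g(z_i)$ with $x_i - \tau\nabla_i f(\x) - z_i = \tau{\lambda}\nu_i$. Taking absolute values and using the triangle inequality gives $\tau{\lambda}|\nu_i| \le |x_i - z_i| + \tau|\nabla_i f(\x)|$. On the right, $|x_i - z_i| < 1$ strictly, because $x_i\in[0,1]$ while $z_i\in(0,1)$, and $|\nabla_i f(\x)| \le \|\nabla f(\x)\|_\infty \le 3\overline{{\lambda}}$ by \eqref{lower-bd-pi} and $\x\in\B$; on the left, $|\nu_i| \ge 3$ by \eqref{lowerbound}. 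Combining the two estimates yields $3\tau{\lambda} < 1 + 3\tau\overline{{\lambda}}$, i.e.\ ${\lambda} < \overline{{\lambda}} + 1/(3\tau)$, which contradicts the hypothesis ${\lambda}\ge\overline{{\lambda}}+1/(3\tau)$. Hence no coordinate of $\z$ lies in $(0,1)$, i.e.\ $\z\in\{0,1\}^n$.

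I do not anticipate a serious obstacle: the estimate is essentially a rerun of \eqref{opt-contradiction}, and the sum-rule justification is inherited verbatim from the proof of Theorem~\ref{firstPST}. The only point requiring genuine care is the \emph{strict} inequality $|x_i - z_i| < 1$ — this is what upgrades the conclusion from the naive ``${\lambda} > \overline{{\lambda}}+1/(3\tau)$'' to the stated ``${\lambda}\ge\overline{{\lambda}}+1/(3\tau)$,'' so I would make sure to flag it. As an alternative, one could give a one-line proof straight from Lemma~\ref{lemma-proximal-closed}: since $\overline{{\lambda}}\ge 0$, the hypothesis forces $\tau{\lambda}\ge 1/3 > 1/6$, whence each coordinate proximal map $\mathrm{Prox}^B_{\tau{\lambda} g}$ is given by \eqref{eq-prox1} and is therefore binary-valued. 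I would nonetheless present the subdifferential argument above, since it is uniform with the rest of Section~\ref{Section-model} and makes transparent why the threshold has the specific form $\overline{{\lambda}}+1/(3\tau)$.
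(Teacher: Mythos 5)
Your proof is correct and is essentially the paper's own argument: the same Fermat-rule optimality condition for the prox subproblem, followed by the same contradiction obtained from a hypothetical coordinate $z_i\in(0,1)$ via the three bounds $|\nu_i|\ge 3$ from \eqref{lowerbound}, $|x_i-z_i|<1$, and $\tau|\nabla_i f(\x)|\le 3\tau\overline{\lambda}$ from \eqref{lower-bd-pi}. Two minor points in your favor: you correctly identify that the strictness must come from $|x_i-z_i|<1$ in order to reach a contradiction under the non-strict hypothesis $\lambda\ge\overline{\lambda}+1/(3\tau)$ (the paper's displayed chain instead ends with a strict ``$<3\tau\lambda$'' that the hypothesis alone does not justify), and your one-line alternative via Lemma~\ref{lemma-proximal-closed} (since $\tau\lambda\ge\tau\overline{\lambda}+1/3\ge 1/3>1/6$ forces the binary-valued case \eqref{eq-prox1}) is also valid and arguably cleaner.
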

\begin{proof} By the definition of ${\rm Prox}_{ \tau{\lambda} {p}}^{\B}$, we have 
\begin{equation*} 
\begin{aligned}
\z \in  {\rm argmin}_{\w\in\B}~\frac{1}{2}\| \w -({\x}- \tau \nabla f({\x}))\|^2+\tau{\lambda} {p}(\w).
\end{aligned}\end{equation*} 
Then the similar reasoning to derive   \eqref{opt-P-sta} enables the following optimality condition,
$$0\in\z - {\x} + \tau \nabla f({\x}) +\tau{\lambda} \partial\varphi(\z)+N_{\B}(\z).$$
If there is $i$ such that $z_i\in(0,1)$, then $z_i- x_i + \tau \nabla_i f({\x}) +\tau{\lambda} v_i=0$, where $\v\in\partial\varphi(\z)$. Hence,
\begin{equation*} 
\begin{aligned}
3\tau{\lambda} &< | \tau{\lambda}v_i|=| z_i- x_i + \tau \nabla_i f({\x})|\\
&\leq | z_i- x_i| + \tau |\nabla_i f({\x})|\\
&\leq 1 + 3\tau \overline{{\lambda}} \\
&< 3\tau{\lambda} ,
\end{aligned}\end{equation*} 
where the third inequality is from $z_i\in(0,1), x_i\in[0,1]$, and \eqref{lower-bd-pi}. This contradiction implies that $\z$ is binary, as desired.
\end{proof}
%

%


 \section{Algorithm and convergence} \label{Section-algorithm}

In this section, we develop the algorithm to solve problem \eqref{pi-penalty}. Specifically, for current point $\x^k\in\R^n$, the next point is updated by,
\begin{eqnarray}\label{Inter9121}\x^{k+1} \in {\rm Prox}_{\tau_k{\lambda}_k {p}}^{\B}\Big(\x^k - \tau_k \nabla f(\x^k)\Big),\end{eqnarray}
where $\tau_k$ and ${\lambda}_k$ are updated adaptively.  Stopping criteria can be set as
\begin{eqnarray}\label{stop}\x^{k}\in\{0,1\}^n, \qquad\left\|\x^{k} -  \x^{k+1}\right\|=\left\|\x^{k} - {\rm Prox}_{\tau_k{\lambda}_k {p}}^{\B}\left(\x^k - \tau_k  \nabla f(\x^k)\right)\right\|<\varepsilon,\end{eqnarray} 
where $\varepsilon\in(0,1)$ is a given tolerance. One can observe that if $\x^k$ satisfies the above conditions for any given $\varepsilon$, then it is a binary P-stationary point of problem \eqref{pi-penalty} with a particular ${\lambda}$, as outlined in Theorem \ref{global-convergence}. The main update, \eqref{Inter9121}, is a typical step of  the proximal point algorithm. To facilitate binary solutions, we increase penalty parameter $\lambda$ adaptively. Overall, we present the algorithmic framework in Algorithm \ref{BN-algo} and call it an adaptive proximal point algorithm (APPA).

\begin{algorithm}[!th]
    \SetAlgoLined

 \textbf{Input} $\x^0\in \B$,  $({\lambda}_0, \eta, \sigma, \theta)>0, \alpha\in(0,1)$, $\pi>1$,  and an integer $k_0>0$.
 
	\For{$k=0,1,2,\ldots$}{
	
	 Find the smallest integer $s_k\in\{0,1,2,\ldots\}$ such that 
	\begin{align}\label{eta-x-F-k-1}
	\tau_k &~=~ \eta \alpha^{s_k},\\
	\label{eta-x-F-k-2}	\x^{k+1} &~\in~ {\rm Prox}_{\tau_k{\lambda}_k {p}}^{\B}\Big(\x^k - \tau_k \nabla f(\x^k)\Big),\\
	\label{eta-x-F-k-3}	F(\x^{k+1};{\lambda}_{k})&~\leq~ F(\x^{k};{\lambda}_{k}) - {(\sigma/2)}\|\x^{k+1}-\x^{k}\|^2.
	\end{align} 
	
	 If \eqref{stop} holds, then stop.
	   
	 If mod$(k+1,k_0)=0$ and ${\lambda}_{k}<\theta$, then ${\lambda}_{k+1}=\pi {\lambda}_{k},$ else ${\lambda}_{k+1}= {\lambda}_{k}.$
}
 
	\caption{Adaptive proximal point algorithm (APPA).}\label{BN-algo}
\end{algorithm}

In Algorithm \ref{BN-algo}, we increase the penalty parameter only when mod$(k+1,k_0)=0$ and ${\lambda}_{k}<\theta$, where mod$(a,b)$ returns  the remainder after division of $a$ by $b$. This is because condition mod$(k+1,k_0)=0$ prevents ${\lambda}_{k}$ from increasing too quickly, while threshold $\theta$ ensures that ${\lambda}_{k}$ does not tend to $\infty$. According to Lemma \ref{P-binary}, there exists a finite threshold beyond which   $\x^{k+1}$ is ensured to be binary. Therefore, there is no reason to continue increasing ${\lambda}_{k}$ once it exceeds the threshold.

\subsection{Convergence analysis}
To establish the convergence, we always set 
\begin{equation}\label{k-lower-bd}
\theta \geq  \overline{{\lambda}}+\frac{  \sigma+L }{3\alpha}.
\end{equation} 
Based on this $\theta$, we define some constants:
\begin{equation}\label{def-constants}
\begin{aligned}
 \tau_\infty &:=\eta \alpha^{S} \qquad &&\text{with}\qquad S:=-\left\lceil\log_{\alpha}(\eta(\sigma+L)\right\rceil,\\
{\lambda}_\infty&:= {\lambda}_0\pi^K \qquad &&\text{with}\qquad K:=\left\lceil k_0 \max\left\{0,\log_{\pi} \left(\frac{  \theta}{ {\lambda}_0}    \right)\right\}\right\rceil.
\end{aligned}
\end{equation}
Based on the above constants, our first result shows that $\tau_k$ is well defined,  the point generated by APPA is always binary, and sequence $\{f(\x^k)\}$ is non-increasing after finitely many iterations. 
\begin{lemma}\label{lemma-decreasing-f} The following claims hold for  APPA  if $f$ is $L$-strong smoothness on $\B$.
\begin{itemize}[leftmargin=16pt]
\item[1)] For any $k\geq 1$, conditions \eqref{eta-x-F-k-1}-\eqref{eta-x-F-k-3} are ensured with $\tau_k = \tau_\infty$ and
\begin{equation}\label{eta-k-bd}
\frac{1}{\sigma+L} \geq \tau_\infty \geq \frac{\alpha}{\sigma+L}.
\end{equation} 
\item[2)]  For any $k> K$, it holds ${\lambda}_k \equiv {\lambda}_\infty$, $\x^{k}\in\{0,1\}^n$,  and 
\begin{equation}\label{decreasing-property}
\begin{aligned}
f(\x^{k+1})  -f(\x^{k})  \leq   \frac{\sigma}{2}  \|\x^{k+1}-\x^k \|^2.
\end{aligned}
\end{equation} 
\end{itemize}
\end{lemma}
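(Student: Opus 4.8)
The plan is to prove the two claims in turn, with claim 1) providing the uniform step-size bound that makes claim 2) essentially a bookkeeping exercise about the penalty update. For claim 1), the key observation is that the line-search condition \eqref{eta-x-F-k-3} is automatically satisfied once $\tau_k$ is small enough relative to $L$ and $\sigma$. Concretely, for any $\tau_k>0$ and any $\x^{k+1}$ satisfying \eqref{eta-x-F-k-2}, the definition of the proximal operator gives
\[
\|\x^{k+1}-(\x^k-\tau_k\nabla f(\x^k))\|^2 + 2\tau_k{\lambda}_k p(\x^{k+1}) \le \|\x^k-(\x^k-\tau_k\nabla f(\x^k))\|^2 + 2\tau_k{\lambda}_k p(\x^k),
\]
which rearranges to $\|\x^{k+1}-\x^k\|^2 \le 2\tau_k\langle \x^k-\x^{k+1},\nabla f(\x^k)\rangle + 2\tau_k{\lambda}_k(p(\x^k)-p(\x^{k+1}))$. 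Combining this with the $L$-strong smoothness inequality applied to the pair $\x^k,\x^{k+1}\in\B$ yields $F(\x^{k+1};{\lambda}_k)-F(\x^k;{\lambda}_k) \le (L/2 - 1/(2\tau_k))\|\x^{k+1}-\x^k\|^2$, exactly as in the proof of part 2) of Theorem~\ref{firstPST}. Hence \eqref{eta-x-F-k-3} holds as soon as $1/\tau_k - L \ge \sigma$, i.e. $\tau_k \le 1/(\sigma+L)$. Since the backtracking picks $\tau_k = \eta\alpha^{s_k}$ with $s_k$ the \emph{smallest} integer making this work, and since $\tau_\infty = \eta\alpha^S$ with $S = -\lceil\log_\alpha(\eta(\sigma+L))\rceil$ is the largest number of the form $\eta\alpha^s$ not exceeding $1/(\sigma+L)$, one gets $\tau_k = \tau_\infty$ for all $k\ge 1$ (the value at which backtracking first succeeds is fixed), together with the sandwich \eqref{eta-k-bd}: $\tau_\infty \le 1/(\sigma+L)$ by construction, and $\tau_\infty \ge \alpha/(\sigma+L)$ because $\tau_\infty/\alpha = \eta\alpha^{S-1} > 1/(\sigma+L)$ by minimality of the exponent.

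For claim 2), first handle the penalty parameter. The update rule multiplies ${\lambda}_k$ by $\pi$ only when $\mathrm{mod}(k+1,k_0)=0$ and ${\lambda}_k<\theta$; so after at most $\lceil k_0\log_\pi(\theta/{\lambda}_0)\rceil$ iterations the value reaches or exceeds $\theta$ and freezes. This is precisely the index $K$ in \eqref{def-constants} (the $\max\{0,\cdot\}$ covering the case ${\lambda}_0\ge\theta$ already), and the frozen value is ${\lambda}_\infty={\lambda}_0\pi^{K/k_0}$ — here I would check the exact exponent bookkeeping against the definition, since the paper writes ${\lambda}_\infty={\lambda}_0\pi^K$, so the intended reading is that the number of actual multiplications is $K$ (one expects $K$ in \eqref{def-constants} to be the count of multiplications, not of iterations; I will state it consistently with \eqref{def-constants} as given). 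In any case, for $k>K$ we have ${\lambda}_k\equiv{\lambda}_\infty\ge\theta$. Next, combining \eqref{k-lower-bd} with \eqref{eta-k-bd} gives
\[
{\lambda}_\infty \ge \theta \ge \overline{{\lambda}} + \frac{\sigma+L}{3\alpha} \ge \overline{{\lambda}} + \frac{1}{3\tau_\infty},
\]
so Lemma~\ref{P-binary}, applied with $\tau=\tau_\infty$, $\x=\x^k$, $\z=\x^{k+1}$, shows $\x^{k+1}\in\{0,1\}^n$ for every $k\ge K$; shifting the index, $\x^k\in\{0,1\}^n$ for every $k>K$. Finally, for such $k$ both $\x^k$ and $\x^{k+1}$ are binary, hence $p(\x^k)=p(\x^{k+1})=0$, so $F(\x^{k+1};{\lambda}_k)=f(\x^{k+1})$ and $F(\x^k;{\lambda}_k)=f(\x^k)$; substituting into \eqref{eta-x-F-k-3} gives $f(\x^{k+1})-f(\x^k)\le -(\sigma/2)\|\x^{k+1}-\x^k\|^2 \le (\sigma/2)\|\x^{k+1}-\x^k\|^2$, which is \eqref{decreasing-property}.

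The only genuinely delicate point is claim 1): one must argue that the backtracking exponent $s_k$ stabilizes to a single value independent of $k$, which requires that the sufficient condition $\tau_k\le 1/(\sigma+L)$ derived above is also, up to the factor $\alpha$, necessary for \eqref{eta-x-F-k-3} in the worst case — or, more carefully, that whatever value the line search returns, it must equal $\tau_\infty$. The clean way is not to claim necessity but simply to note that backtracking returns the \emph{first} (largest) $\tau$ of the form $\eta\alpha^s$ for which \eqref{eta-x-F-k-2}–\eqref{eta-x-F-k-3} hold; since $\tau_\infty$ always works (by the smoothness estimate, for \emph{any} admissible $\x^{k+1}$) and is the largest admissible value of that form below $1/(\sigma+L)$, while any strictly larger one of that form exceeds $1/(\sigma+L)$ and may fail, the returned value is $\tau_\infty$ — here one should be slightly careful and, if \eqref{eta-x-F-k-3} could in principle hold for some larger $\tau$ too, observe that the statement of the lemma is about the value $\tau_\infty$ being \emph{achievable}, consistent with "conditions are ensured with $\tau_k=\tau_\infty$". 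I would phrase claim 1) as: the line search terminates with $s_k$ such that $\tau_k=\tau_\infty$, establishing \eqref{eta-k-bd} along the way.
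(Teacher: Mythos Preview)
Your proposal is correct and follows essentially the same route as the paper: for part 1) you use the proximal inequality at $\x^k$ combined with $L$-strong smoothness to obtain $F(\x^{k+1};\lambda_k)-F(\x^k;\lambda_k)\le (L-1/\tau_k)\|\x^{k+1}-\x^k\|^2/2$, exactly as the paper does, and for part 2) you freeze $\lambda_k$ at $\lambda_\infty$, invoke Lemma~\ref{P-binary} via the threshold \eqref{k-lower-bd} together with \eqref{eta-k-bd}, and then substitute $p(\x^k)=p(\x^{k+1})=0$ into \eqref{eta-x-F-k-3}. The only cosmetic differences are that the paper splits the case $\lambda_0\ge\theta$ explicitly (you absorb it into the $\max\{0,\cdot\}$), and that you spend an extra paragraph worrying whether the backtracking might terminate at some $\tau_k>\tau_\infty$; the paper simply reads ``ensured with $\tau_k=\tau_\infty$'' as ``$\tau_\infty$ is a valid choice'' and does not address that subtlety either, so your final resolution matches theirs.
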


\proof 1) For any $\tau>0$, the following problem
	\begin{eqnarray*}
		 \w^{k+1} \in {\rm Prox}_{\tau{\lambda}_k {p}}^{\B}\left(\x^k - \tau  \nabla f(\x^k)\right),
	\end{eqnarray*} 
	enables us to derive that 
\begin{equation*}
\begin{aligned}
 \|\w^{k+1}-\x^k + \tau  \nabla f(\x^k)\|^2+ 2\tau{\lambda}_{k} {p}(\w^{k+1})
 &\leq  \|\x^{k}-\x^k + \tau  \nabla f(\x^k)\|^2+ 2\tau{\lambda}_{k} {p}(\x^k),
\end{aligned}
\end{equation*}
which leads to
\begin{equation*}
2{\lambda}_{k}  {p}(\w^{k+1})-  2{\lambda}_{k}{p}(\x^{k})  +2\langle \nabla f(\x^k), \w^{k+1}-\x^k\rangle \leq -({1}/{ \tau})\|\w^{k+1}-\x^k\|^2.
\end{equation*}
Using the above condition and the $L$-strongly smoothness of $f$ yield that
\begin{equation*}
\begin{aligned}
&2F(\w^{k+1};{\lambda}_{k})-2F(\x^{k};{\lambda}_{k})\\  
\leq ~& 2{\lambda}_{k} {p}(\w^{k+1})-  2{\lambda}_{k}{p}(\x^{k})  + 2\langle \nabla f(\x^{k}), \w^{k+1}-\x^{k}\rangle +L\| \w^{k+1}-\x^{k}\|^{2} \\
\leq ~& \left( {L} - {1}/\tau\right)\|\w^{k+1}-\x^k\|^2.
\end{aligned}
\end{equation*}
Therefore, if $\tau\in(0,1/(\sigma+L)]$, then we have
$$ F(\w^{k+1};{\lambda}_{k})-F(\x^{k};{\lambda}_{k})  \leq - \frac{\sigma}{2} \|\w^{k+1}-\x^k\|^2.$$
This means conditions \eqref{eta-x-F-k-1}-\eqref{eta-x-F-k-3} hold with $\w^{k+1}=\x^{k+1}$ when $$\tau_k= \eta \alpha^{s_k}=  \eta \alpha^{S} \in\left[\frac{\alpha}{\sigma+L},\frac{1}{\sigma+L}\right].$$
2) If ${\lambda}_0\geq \theta$, then we always have ${\lambda}_{k}\equiv{\lambda}_0$ as $\{{\lambda}_{k}\}$ is non-decreasing. Therefore,
 \begin{equation} \label{pi-eta-great-half}
  {\lambda}_{k}\tau_{k} - \overline{{\lambda}} \tau_{k}\geq \frac{ ({\lambda}_0 - \overline{{\lambda}})\alpha}{\sigma+L} \geq \frac{ (\theta - \overline{{\lambda}})\alpha}{\sigma+L} \geq \frac{1}{3},
 \end{equation}
 where the first and last inequalities are from \eqref{eta-k-bd}  and \eqref{k-lower-bd}. Thus  $\x^{k+1}\in\{0,1\}^n$ from Lemma \ref{P-binary}. We note that if ${\lambda}_0\geq \theta$ then $K=0$, thereby leading to the conclusion.
 
 Now we consider ${\lambda}_0<\theta$. It follows from \eqref{def-constants} that
  \begin{equation*} 
 {\lambda}_{K} =  {\lambda}_0\pi^{\lceil K/k_0\rceil} \geq \theta. 
 \end{equation*}
Therefore, for any $k\geq K$, we have ${\lambda}_{k}\equiv{\lambda}_{K}\equiv{\lambda}_\infty$. Then  similar reasoning to show \eqref{pi-eta-great-half} can show $ {\lambda}_{k}\tau_{k}\geq  \overline{{\lambda}} \tau_{k}+1/3$, which by Lemma \ref{P-binary} yields $\x^{k}\in\{0,1\}^n$  and thus ${p}(\x^k)=0$. Overall
 \begin{equation*} 
 {\lambda}_k\left\{
 \begin{array}{ll}
 \in\left[{\lambda}_0, {\lambda}_\infty\right),&~ k\in[0,K],\\[1ex]
 \equiv{\lambda}_\infty,&~ k> K,
 \end{array} \right.
 \end{equation*}
 namely, $\{{\lambda}_k\}$ is bounded. Based on these facts,  for any $k> K$, we have
\begin{equation*}
\begin{aligned}
f(\x^{k+1})  -f(\x^{k}) &=
 F(\x^{k+1};{\lambda}_{k+1})-F(\x^{k};{\lambda}_{k})\\
 &=
 F(\x^{k+1};{\lambda}_{\infty})-F(\x^{k};{\lambda}_{\infty})\\
&\leq   ({\sigma}/{2})  \|\x^{k+1}-\x^k \|^2.
\end{aligned}
\end{equation*}
where the last inequality is from \eqref{eta-x-F-k-3}. 
  \qed
\begin{theorem} \label{global-convergence}
The following results hold for  APPA if $f$ is $L$-strong smoothness on $\B$.
\begin{itemize}[leftmargin=16pt]
\item[1)] Whole sequence $\{f(\x^k)\}$ converges and $\lim_{k\to\infty}\|\x^{k+1}-\x^k \|=0$. 
\item[2)] Whole sequence $\{\x^k\}$ converges to a P-stationary point with $\tau\geq 1/(3{\lambda}_\infty-3\overline{{\lambda}})$ of 
\begin{equation}
\label{pi-infty-penalty}
\min_{\x\in \B}  f(\x)+{\lambda}_\infty {p}(\x).
\end{equation}
\end{itemize}
\end{theorem}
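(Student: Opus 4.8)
The plan is to build everything on Lemma~\ref{lemma-decreasing-f}: past a finite index the algorithm's parameters are frozen ($\lambda_k\equiv\lambda_\infty$, $\tau_k\equiv\tau_\infty$), every iterate is binary, and an Armijo-type sufficient-decrease inequality holds. I would then combine this with the elementary fact that two distinct points of $\{0,1\}^n$ are at Euclidean distance at least $1$.

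For part~1), I would fix $k>K$ and note that by Lemma~\ref{lemma-decreasing-f}(2) both $\x^k$ and $\x^{k+1}$ lie in $\{0,1\}^n$, so $p(\x^k)=p(\x^{k+1})=0$ and condition \eqref{eta-x-F-k-3} reduces to $f(\x^{k+1})\le f(\x^k)-\tfrac{\sigma}{2}\|\x^{k+1}-\x^k\|^2$. Hence $\{f(\x^k)\}_{k>K}$ is non-increasing; since $f$ is continuous on the compact box $\B$ it is bounded below there, so this tail — and therefore the whole sequence $\{f(\x^k)\}$ — converges to some $f^*$. Telescoping $f(\x^k)-f(\x^{k+1})\ge\tfrac{\sigma}{2}\|\x^{k+1}-\x^k\|^2$ over $k=K+1,\dots,m$ gives $\tfrac{\sigma}{2}\sum_{k=K+1}^{m}\|\x^{k+1}-\x^k\|^2\le f(\x^{K+1})-f^*$, and letting $m\to\infty$ yields $\sum_k\|\x^{k+1}-\x^k\|^2<\infty$, hence $\|\x^{k+1}-\x^k\|\to0$.

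For part~2), because $\x^k\in\{0,1\}^n$ for $k>K$ and $\|\x^{k+1}-\x^k\|\to0$, I would pick $\bar k>K$ with $\|\x^{k+1}-\x^k\|<1$ for all $k\ge\bar k$; then $\x^{k+1}=\x^k$ for every such $k$, so $\x^k\equiv\bar\x$ for some $\bar\x\in\{0,1\}^n$ and the whole sequence converges to $\bar\x$. Inserting $k\ge\bar k$ into the update \eqref{eta-x-F-k-2} and using $\tau_k\equiv\tau_\infty$ (Lemma~\ref{lemma-decreasing-f}(1)) together with $\lambda_k\equiv\lambda_\infty$ (Lemma~\ref{lemma-decreasing-f}(2)) gives $\bar\x\in{\rm Prox}^{\B}_{\tau_\infty\lambda_\infty p}\big(\bar\x-\tau_\infty\nabla f(\bar\x)\big)$, i.e.\ $\bar\x$ is a P-stationary point of \eqref{pi-infty-penalty} with $\tau=\tau_\infty$. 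Finally I would verify the quantitative bound: \eqref{eta-k-bd} gives $\tau_\infty\ge\alpha/(\sigma+L)$, while $\lambda_\infty\ge\theta\ge\overline{\lambda}+(\sigma+L)/(3\alpha)$ by \eqref{k-lower-bd} and the definition of $\lambda_\infty$ in \eqref{def-constants}, so $3\lambda_\infty-3\overline{\lambda}\ge(\sigma+L)/\alpha\ge 1/\tau_\infty$, that is $\tau_\infty\ge 1/(3\lambda_\infty-3\overline{\lambda})$.

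I do not expect a genuine obstacle: the skeleton is the classical chain ``sufficient decrease $\Rightarrow$ summable steps $\Rightarrow$ vanishing successive differences'', and the discreteness of $\{0,1\}^n$ upgrades $\|\x^{k+1}-\x^k\|\to0$ to the iterates being eventually constant, which is exactly what identifies the limit as a bona fide P-stationary point. The only things requiring care are invoking Lemma~\ref{lemma-decreasing-f} on the correct index ranges (binariness and $\lambda_k\equiv\lambda_\infty$ only for $k>K$, step stabilization $\tau_k\equiv\tau_\infty$ for $k\ge1$), using the binary-restricted form of \eqref{eta-x-F-k-3} rather than a generic descent on $F(\cdot;\lambda_k)$ (whose parameter still varies for $k\le K$), and pushing the final inequality chain through with the constants of \eqref{k-lower-bd}–\eqref{def-constants}, which is mechanical.
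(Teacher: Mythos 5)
Your proof is correct, and part 2) takes a genuinely different route from the paper's. For part 1) you and the paper do the same thing: the sufficient-decrease condition \eqref{eta-x-F-k-3}, specialized to the binary tail where $p(\x^k)=0$, makes $\{f(\x^k)\}$ eventually non-increasing and bounded, hence convergent, and telescoping gives $\sum_k\|\x^{k+1}-\x^k\|^2<\infty$ (you state the telescoping explicitly; the paper compresses it, and in fact \eqref{decreasing-property} as printed has the wrong sign on the right-hand side --- you correctly work with the $-\sigma/2$ version coming from \eqref{eta-x-F-k-3}). For part 2) the paper extracts a convergent subsequence, passes to the limit in the prox inclusion via the outer semicontinuity of the proximal mapping (\cite[Theorem 1.25]{rock98}), and then upgrades subsequential to whole-sequence convergence using the isolatedness of binary points together with \cite[Lemma 4.10]{more83}. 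You instead exploit the discreteness of $\{0,1\}^n$ directly: since the tail iterates are binary and successive differences vanish, the sequence is eventually constant, so the limit $\bar\x$ satisfies the prox inclusion verbatim with $\tau=\tau_\infty$ and no limit-passing is needed. This is exactly the argument the paper defers to Theorem \ref{finite-termination}; your version folds it into the convergence proof, which is more elementary (no appeal to outer semicontinuity or the Mor\'e lemma) and yields finite termination as a byproduct. Your verification of the quantitative bound $\tau_\infty\geq 1/(3\lambda_\infty-3\overline{\lambda})$ via \eqref{eta-k-bd}, \eqref{k-lower-bd}, and $\lambda_\infty\geq\theta$ matches the paper's \eqref{pi-infty-eta-infty}.
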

\proof 1) Since $\{\x^k\}\subseteq\B$,  we have $\{f(\x^k)\}$ is bounded because $f$ is continuous. Therefore, $\{f(\x^k)\}$ converges by \eqref{decreasing-property}. Taking the limit of the both sides of \eqref{decreasing-property} derives $\lim_{k\to\infty}\|\x^{k+1}-\x^k \|=0$. 

2)  By \eqref{pi-eta-great-half}, for any $k> K$ we have \begin{equation} \label{pi-infty-eta-infty}
 {\lambda}_\infty\tau_\infty =  {\lambda}_{k+1}\tau_{k+1} \geq   \overline{{\lambda}} \tau_\infty + {1}/{3}.
 \end{equation}
 Let $\{ \x^k: k\in T\}$ be a convergent subsequence  of $\{\x^k\}\subseteq\B$ and $\lim_{k(\in T)\to\infty}   \x^k  =   \x^\infty,$ where $T\subseteq\{0,1,2,3\ldots\}$. 
Moreover, it follow from Lemma \ref{lemma-decreasing-f} that 	
\begin{align*}
\x^{k+1}  &\in  {\rm Prox}_{\tau_\infty{\lambda}_\infty {p}}^{\B}\Big(\x^k - \tau_\infty \nabla f(\x^k)\Big)\\
&= {\rm Prox}_{\tau_\infty{\lambda}_\infty ({p}+\delta_\B)} \Big(\x^k - \tau_\infty \nabla f(\x^k)\Big),
\end{align*}
for any $k> K$. Using these facts, \cite[Theorem 1.25]{rock98}, and taking the limit of both sides of the above condition along with $k(\in T)\to\infty$ enable us to derive that
\begin{align*} \x^\infty & \in   {\rm Prox}_{\tau_\infty{\lambda}_\infty  ({p}+\delta_\B)} \Big( \x^\infty- \tau_\infty \nabla f( \x^\infty)\Big)\\
&={\rm Prox}_{\tau_\infty{\lambda}_\infty   {p} }^\B \Big( \x^\infty- \tau_\infty \nabla f( \x^\infty)\Big).\end{align*}
Therefore $\x^\infty$ is a P-stationary point of \eqref{pi-infty-penalty}  with $\tau_\infty\geq 1/(3{\lambda}_\infty-3\overline{{\lambda}})$. Since any $\x^\infty\in\{0,1\}^n$ which means it is isolated around its neighbour region. This together with $\lim_{k\to\infty}\|\x^{k+1}-\x^k \|=0$ and \cite[Lemma 4.10]{more83} delivers the  whole sequence convergence. \qed


\begin{theorem}\label{finite-termination}
  APPA terminates within finitely many steps if $f$ is $L$-strong smoothness on $\B$. 
\end{theorem}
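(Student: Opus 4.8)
The plan is to combine the three conclusions of Lemma~\ref{lemma-decreasing-f} and Theorem~\ref{global-convergence} with the finite-cardinality of $\{0,1\}^n$ to show that only finitely many iterates can be distinct, and hence the stopping test \eqref{stop} must eventually trigger. First I would recall that by part~2) of Lemma~\ref{lemma-decreasing-f}, for every $k>K$ the iterate $\x^k$ lies in $\{0,1\}^n$, so the tail of the sequence $\{\x^k\}_{k>K}$ takes values in the finite set $\{0,1\}^n$. By part~1) of Theorem~\ref{global-convergence}, $\|\x^{k+1}-\x^k\|\to0$; but for binary vectors $\x^{k+1},\x^k\in\{0,1\}^n$ we have the dichotomy $\|\x^{k+1}-\x^k\|=0$ or $\|\x^{k+1}-\x^k\|\geq1$. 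Combining these two facts, there exists an index $K'\geq K$ such that $\x^{k+1}=\x^k$ for all $k\geq K'$; in particular $\|\x^{k}-\x^{k+1}\|=0<\varepsilon$ and $\x^k\in\{0,1\}^n$, so the stopping criterion \eqref{stop} is satisfied at iteration $k=K'$ (indeed at every $k\geq K'$).

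It remains to make sure that reaching iteration $K'$ itself takes only finitely many steps, i.e.\ that each individual iteration is well defined and terminates. This is exactly part~1) of Lemma~\ref{lemma-decreasing-f}: the inner line search over $s_k$ in \eqref{eta-x-F-k-1}--\eqref{eta-x-F-k-3} succeeds with $\tau_k=\tau_\infty=\eta\alpha^S$ for the explicit finite integer $S=-\lceil\log_\alpha(\eta(\sigma+L))\rceil$, because for any $\tau\in(0,1/(\sigma+L)]$ the sufficient-decrease condition \eqref{eta-x-F-k-3} holds; hence the smallest feasible $s_k$ is no larger than $S$ and each iteration completes after at most $S+1$ proximal evaluations. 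Therefore APPA performs at most $K'+1$ outer iterations, each consisting of finitely many inner steps, and then halts.

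The only genuinely load-bearing observation is the binary-gap argument: that a convergent-in-increments sequence eventually constant once it is confined to the lattice $\{0,1\}^n$. The potential subtlety is that $\tau_k$ could in principle vary with $k$ and thereby change the proximal map applied, which might prevent the increments from being exactly zero; but Lemma~\ref{lemma-decreasing-f}(1) pins $\tau_k\equiv\tau_\infty$ and Lemma~\ref{lemma-decreasing-f}(2) pins $\lambda_k\equiv\lambda_\infty$ for $k>K$, so on the tail the update \eqref{Inter9121} is a fixed set-valued map; once $\x^{k_0}$ is a fixed point of it (which the $\ell_2$-gap forces for large $k_0$) the algorithm may select $\x^{k_0+1}=\x^{k_0}$, and \eqref{eta-x-F-k-3} holds with equality, so nothing obstructs termination. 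One should also note that $\varepsilon\in(0,1)$ by hypothesis, which is precisely what makes the strict inequality $\|\x^k-\x^{k+1}\|=0<\varepsilon$ decisive against the alternative $\|\x^k-\x^{k+1}\|\geq1$.
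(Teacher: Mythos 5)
Your proposal is correct and follows essentially the same route as the paper: restrict to the tail $k>K$ where Lemma~\ref{lemma-decreasing-f} guarantees binary iterates, invoke $\|\x^{k+1}-\x^k\|\to 0$ from Theorem~\ref{global-convergence}, and use the dichotomy that two binary vectors are either equal or at distance at least $1$ to force eventual constancy and hence satisfaction of \eqref{stop}. The extra remarks on the well-definedness of each inner line search are already covered by Lemma~\ref{lemma-decreasing-f}(1) and do not change the argument.
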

\proof   
For any $k> K$, if $\x^{k+1}\neq\x^k$, we have $ \|\x^{k+1}-\x^k \|>1$ due to  $\x^{k}\in\{0,1\}^n$,  contradicting to $\lim_{k\to\infty}\|\x^{k+1}-\x^k \|=0$.  Therefore, we can conclude that there is $K'> K$ such that $\x^{k+1}\equiv\x^k$ for any $k\geq K'$, which indicates conditions in \eqref{stop} are satisfied. \qed

\section{Numerical experiment}
In this section, we evaluate the performance of APPA on three problems: the recovery problems, the Multiple-Input-Multiple-Output (MIMO) detection, and the quadratic unconstrained binary optimization (QUBO). All codes are implemented in MATLAB (R2021a) and executed on a desktop computer with an Intel(R) Xeon W-3465X CPU (2.50 GHz) and 128 GB of RAM.

\subsection{Recovery problem}\label{sec:rec}
We first demonstrate the performance of APPA for solving a recovery problem. The problem aims to recover a binary signal $\x^*$ from the linear system ${{\textbf b}={\textbf A}\x^*}$ with ${{\textbf A}\in\R^{m\times n}}$ and ${\textbf b}\in \R^m$. To find the original signal $\x^*$, we consider the following optimization problem,
 \begin{eqnarray*}
\min\limits_{\x\in{\R^{n}}}~~ \frac{1}{2}\|\textbf{A}\x-\textbf{b}\|_q^{q},~~~
{\rm s.t.}~~   \x\in \{0,1\}^{n},
\end{eqnarray*}
where ${\|\x \|_q^{q}=\sum_i|x_i|^q}$ and ${q>1}$. 
Let ${\textbf b}\in \R^m$ be generated by the linear regression model
$${{\textbf b}={\textbf A}\x^*+ {\rm nf}\cdot {\boldsymbol \varepsilon}},$$
where ${\textbf A}\in\R^{m\times n}$ is the measurement matrix whose entries are independently and identically distributed (i.i.d.) from the standard normal distribution ${\cal N}(0,1)$, followed by normalization ${\textbf A}\leftarrow{\textbf A}/c$ with ${c=\sqrt{m}}$ when ${n\le 10000}$ and ${c=1}$ otherwise. The ground truth signal $\x^*$ is taken from $\{0,1\}^n$ with randomly picked $s$ indices on which entries are $1$. The additive noise ${\boldsymbol \varepsilon}\in\R^m$ has i.i.d. components ${\varepsilon_i\sim\mathcal{N}(0,1)}$ for $i=1,\ldots,m$, and the parameter ${\rm nf}\geq 0$ controls the noise level.

\subsubsection{Benchmark methods}

We compare APPA against NPGM, MEPM \cite{Yuan17}, L2ADMM \cite{Wu19}, EMADM \cite{liu23}, and GUROBI, where NPGM applies Nesterov's proximal gradient method to the LP relaxation of \eqref{UBIP} and EMADM solves its SDP relaxation. Note that GUROBI and EMADM only handle binary quadratic programs ($q = 2$). For APPA, we set $(\eta,\sigma,\lambda,\pi)=(1,10^{-8},0.25,1.5)$, ${\theta=\|\textbf{A}\|_{\infty}+\|\textbf{b}\|_{\infty}}$, update frequency $k_0=100$ (if $n<10000$) or $k_0=50$ (otherwise), and initial penalty $\lambda_0=r\|\textbf{b}^{\top}\textbf{A}\|_{\infty}$ with $r=\min\{0.01,0.1^{4\sqrt{s}/\log_2(mn)}\}$ if $2m\leq n < 10s$ and $r=0.05$ otherwise. For MEPM and L2ADMM, we set $(\rho, \sigma, T)=(0.01,\sqrt{10},10)$ and $(\alpha, \sigma, T)=(0.01,\sqrt{10},10)$ respectively, both with 200 maximum iterations, tolerance $10^{-2}$, and Lipschitz constant $L = \|\textbf{A}\|^2_F/n$. For EMADM, we set $(\beta, \lambda_0, \delta)=(0.1,0.001,10^{-4})$ with $10^4$ maximum iterations and tolerance $10^{-4}$. Finally, we initialize all algorithms by $\x_0 = 0$ and set the maximum running time to 600 seconds. To evaluate the algorithmic performance, we let $\x$ be the solution obtained by one algorithm and report the recovery accuracy (${\rm Acc}:= 1- \| \x-\x^* \|/\| \x^*\|$) and the computational time in seconds. 

\subsubsection{Numerical comparison}

To evaluate the efficiency of selected algorithms for solving the recovery problem in different scenarios. We alter one factor of $(m,n,s,q,\rm{nf})$ to see its effect by fixing the other four factors.

\begin{figure}[!ht]
	\centering
	
	\begin{subfigure}[b]{0.48\textwidth}
		\centering
		\includegraphics[width=0.98\textwidth]{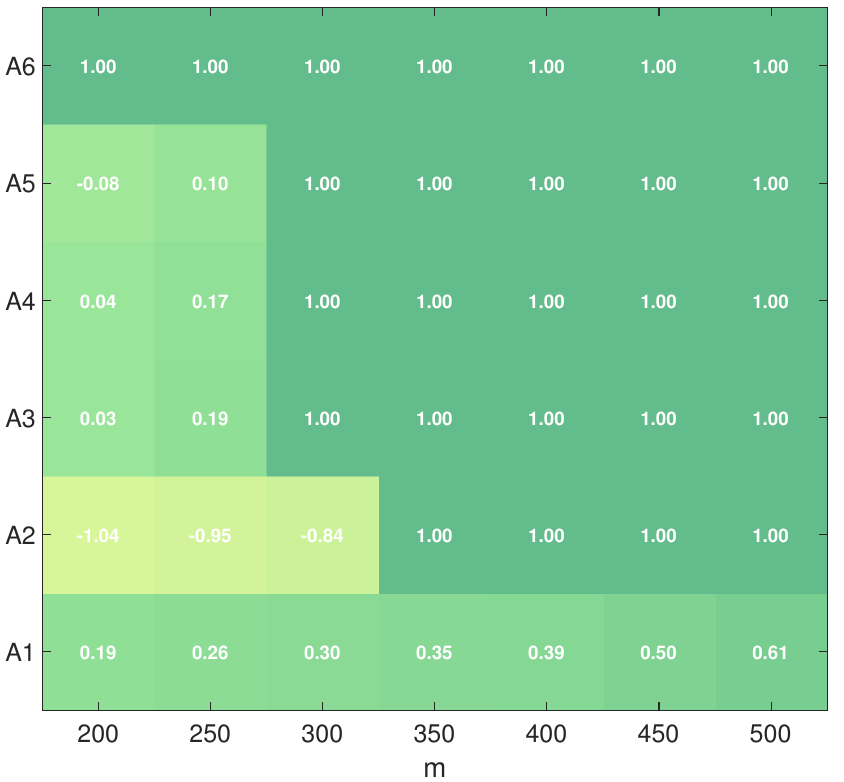}
		\caption{Acc v.s. $m$}
		\label{fig:effect-m}
	\end{subfigure}
	\begin{subfigure}[b]{0.48\textwidth}
		\centering
		\includegraphics[width=0.98\textwidth]{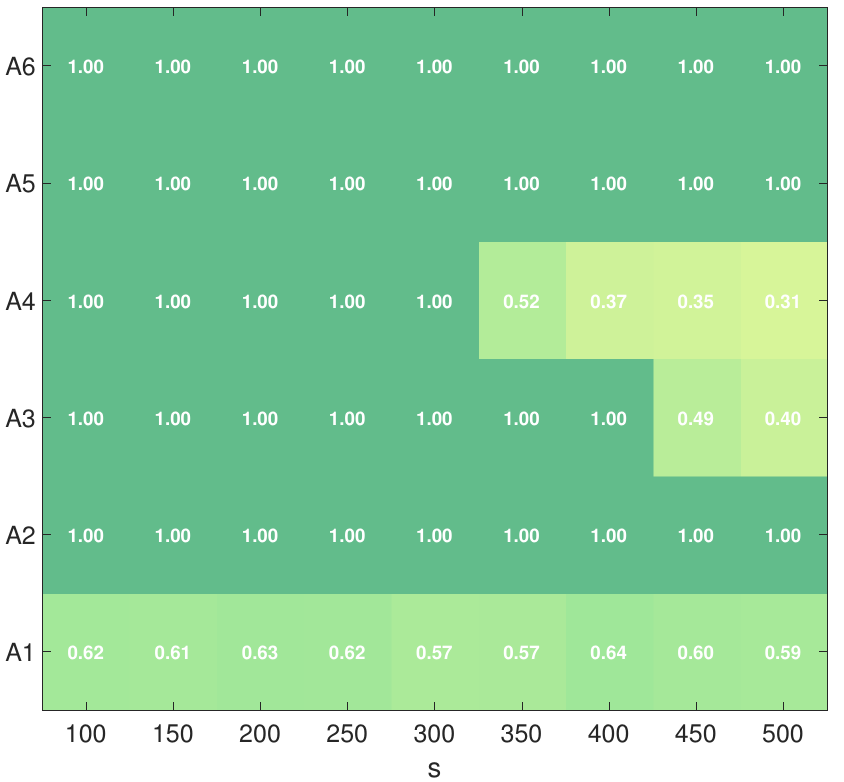}
		\caption{Acc v.s. $s$}
		\label{fig:effect-s}
	\end{subfigure}
	
	\vspace{0.2cm}
	
	\begin{subfigure}[b]{0.48\textwidth}
		\centering
		\includegraphics[width=0.98\textwidth]{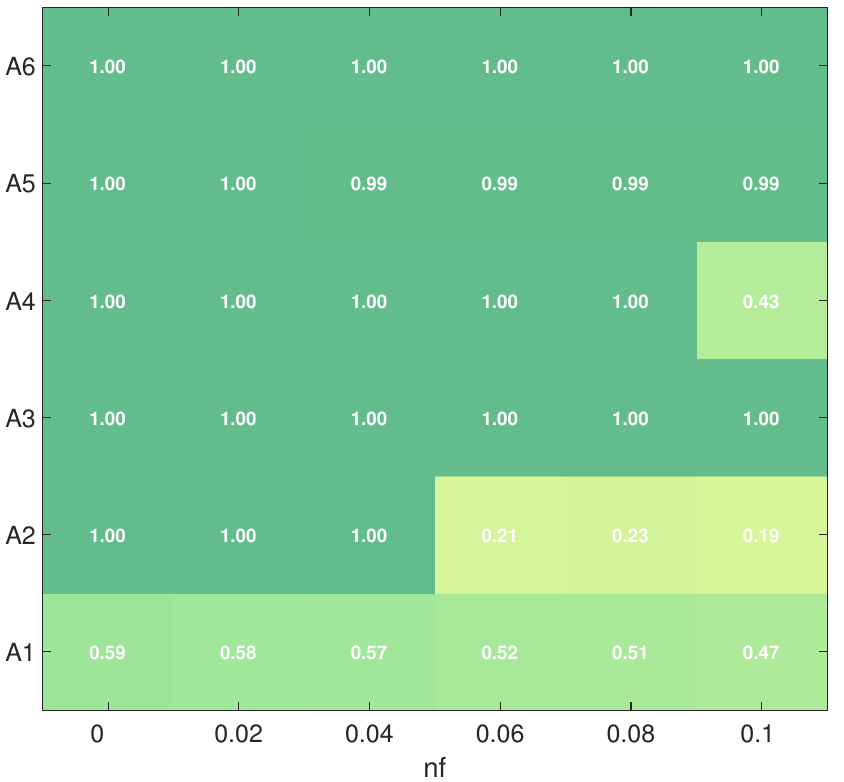}
		\caption{Acc v.s. $\rm{nf}$}
		\label{fig:effect-nf}
	\end{subfigure}
		\begin{subfigure}[b]{0.48\textwidth}
		\centering
		\includegraphics[width=0.98\textwidth]{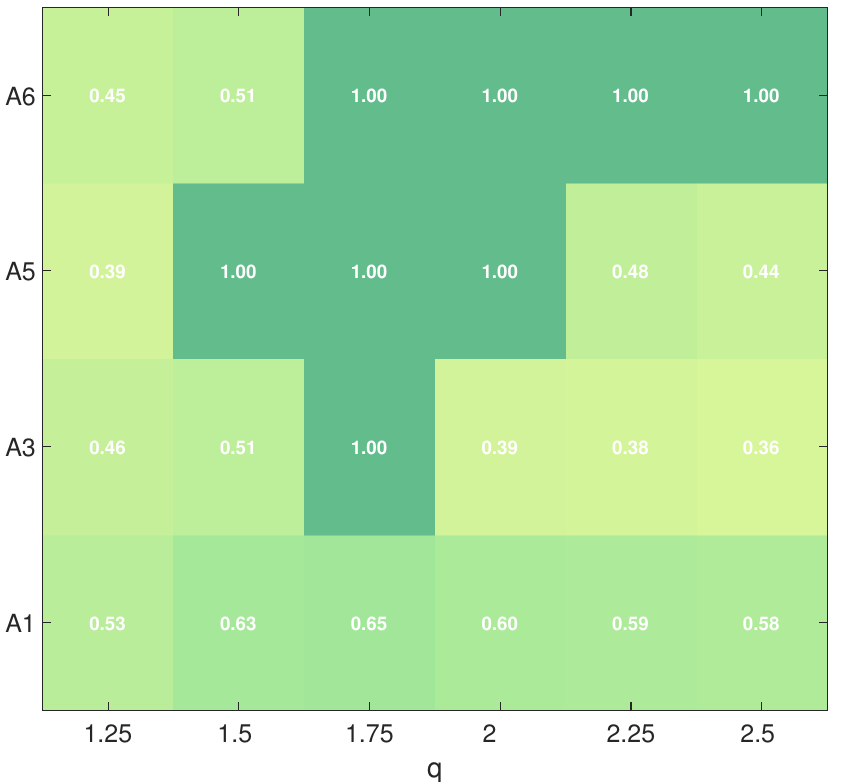} 
		\caption{Acc v.s. $q$}
		\label{fig:effect-q}
	\end{subfigure}\vspace{-2mm}
	\caption{Results on recovery problems, where A1-A6 stand for NPGM,  GUROBI,  MEPM, EMEDM, L2ADMM and APPA, respectively.}\vspace{-2mm}
	\label{fig:effect-m-s-nf-q}
\end{figure}

\begin{table}[!t]
	\renewcommand{\arraystretch}{1.00}\addtolength{\tabcolsep}{4pt}
	\centering
	\caption{Effect of higher dimensions, where A1-A4 stand for APPA,   MEPM,   L2ADMM, and NPGM, respectively.}
	\begin{tabular}{ccccccccccc}
		\hline
		\multirow{2}{*}{$n$} & \multirow{2}{*}{$\rm{nf}$} &  \multicolumn{4}{c}{Acc} && \multicolumn{4}{c}{Time} \\
		\cline{3-6} \cline{8-11}
		& & A1 & A2 & A3 & A4  &&  A1 & A2 & A3 & A4   \\
		\hline
		\multicolumn{11}{c}{$q=1.5$} \\ \hline
		$10^4$ & 0.0 & 1.000 & 1.000 & 0.989 & 0.627 && 0.192 & 24.32 & 39.14 & 3.085 \\
		$10^4$& 0.5 & 1.000 & 1.000 & 0.988 & 0.692 && 0.218 & 23.54 & 38.87 & 3.241 \\
		$10^5$ & 0.0 & 1.000 & 1.000 & 1.000 & -0.108 && 71.89 & 923.7 & 1214.6 & 37.96 \\
		$10^5$& 0.5 & 1.000 & 1.000 & 0.981 & -0.387 && 102.8 & 941.5 & 1298.4 & 39.12 \\
		$10^6$ & 0.0 & 1.000 & -3.397 & -3.142 & -1.805 && 4.427 & 3312.8 & 3600.0 & 117.8 \\
		$10^6$& 0.5 & 1.000 & -3.425 & -3.203 & -1.912 && 67.54 & 3198.6 & 3600.0 & 118.3 \\
		\hline
		\multicolumn{11}{c}{$q=2$} \\ \hline
		$10^4$ & 0.0 & 1.000 & 1.000 & 1.000 & 0.671 && 0.169 & 18.52 & 27.18 & 2.963 \\
		$10^4$& 0.5 & 1.000 & 1.000 & 0.968 & 0.639 && 0.176 & 23.69 & 34.21 & 3.105 \\
		$10^5$ & 0.0 & 1.000 & -5.594 & -5.467 & -4.586 && 2.521 & 1261.7 & 1239.8 & 36.87 \\
		$10^5$& 0.5 & 1.000  & -5.597 & -5.488 & -4.551 && 4.037 & 1272.4 & 1283.5 & 37.45 \\
		$10^6$ & 0.0 & 1.000 & -5.578 & -4.106 & -4.547 && 5.628 & 3225.9 & 3438.2 & 106.9 \\
		$10^6$& 0.5 & 1.000 & -5.541 & -4.132 & -4.573 && 6.891 & 3287.3 & 3421.6 & 109.7 \\
		\hline
		\multicolumn{11}{c}{$q=2.5$} \\ \hline
		$10^4$ & 0.0 & 1.000 & -5.851 & -5.822 & -5.718 && 0.201 & 39.24 & 38.79 & 3.146 \\
		$10^4$& 0.5 & 1.000  & -5.843 & -5.869 & -5.711 && 0.203 & 38.76 & 39.28 & 3.197 \\
		$10^5$ & 0.0 & 1.000 & -5.859 & -5.831 & -5.794 && 4.938 & 1314.6 & 1295.7 & 8.751 \\
		$10^5$& 0.5 & 1.000 & -5.817 & -5.826 & -5.798 && 4.652 & 1293.2 & 1283.4 & 37.84 \\
		$10^6$ & 0.0 & 1.000 & -5.803 & -5.809 & -5.693 && 9.582 & 3528.3 & 3600.0 & 123.4 \\
		$10^6$& 0.5 & 1.000 & -5.776 & -5.836 & -5.721 && 11.43 & 3445.7 & 3600.0 & 117.6 \\
		\hline
	\end{tabular}
	\label{tab:effect-highdim}
\end{table}

{\bf a) Effect of $m$.} We set ${(n,s,q,{\rm nf})=(1000,100,2,0)}$ and vary the number of measurements ${m\in\{250,300,\ldots,500\}}$. For each parameter setting $(m,n,s,q,{\rm nf})$, we perform 20 independent trials and report the median result. As illustrated in Figure \ref{fig:effect-m}, the recovery accuracy improves monotonically as $m$ increases. Notably, when $m\ge350$, all algorithms except NPGM successfully recover the true signal with 100\% accuracy. Meanwhile, APPA attains exact recovery in all cases.

{\bf b) Effect of $s$.} We fix ${(m, n, q, {\rm nf}) = (500, 1000, 2, 0)}$ and vary ${s \in \{250, 300, \ldots, 500\}}$. The median values over 20 trials are presented in Figure~\ref{fig:effect-s}. It is evident that the larger the value of $s$, the more difficult it becomes to detect the ground-truth signal. In all cases, APPA, L2ADMM, and GUROBI successfully achieve exact recovery, whereas EMADM and MEPM fail to do so when $s \ge 300$ and $s \ge 450$, respectively.

{\bf c) Effect of $\text{\rm nf}$.} We fix ${(m,n,s,q)=(500,1000,300,2)}$ and vary the noise level ${\rm nf}\in\{0,0.02,0.04,$ $\ldots,0.1\}$. Figure \ref{fig:effect-nf} presents the median recovery accuracy over 20 trials. The results demonstrate that APPA, MEPM, and L2ADMM exhibit robustness to noise. In contrast, GUROBI (within 600 seconds) fails to recover the signal when ${\rm nf}\ge 0.06$, while EMADM fails at ${\rm nf}= 0.1$. 

{\bf d) Effect of $q$.} We fix ${(m,n,s,\rm{nf})=(500,1000,500,0)}$ but increase $q$ over range $\{1.25,1.5,\ldots,2.5\}$. The median results over 20 trials are reported in Figure \ref{fig:effect-q}. As noted previously, GUROBI and EMADM are only applicable to the quadratic case ($q = 2$), and thus their results are omitted. Again, APPA maintains the highest accuracy, with the exception of scenarios where $q \le 1.5$. 

{\bf e) Effect of higher dimensions.} To evaluate performance on large-scale problems, we consider dimensions ${n\in\{10^4, 10^5, 10^6\}}$ with $(m, s) = (0.5n, 0.01n)$ and noise levels ${\rm nf} \in \{0, 0.5\}$. GUROBI and EMADM are excluded as their computational requirements become prohibitive when $n\geq 10^4$. For ${n\geq10^5}$, we employ sparse measurement matrices $\textbf{A}$ with $10^8$ nonzero entries. The median results over 10 independent trials are presented in Table~\ref{tab:effect-highdim}. Remarkably, APPA consistently outperforms all competing methods across all problem sizes, achieving superior recovery accuracy while maintaining the fastest computation time.

\subsection{MIMO  detection}
Two variants of the MIMO detection problem are considered in this section: classical MIMO detection, characterized by a quadratic objective function, and one-bit MIMO detection, which features a non-quadratic objective function.
\subsubsection{Classical MIMO detection}
The classical MIMO detection problem seeks to reconstruct an unknown signal $\w^*$ from noisy linear observations $\y=\H\w^*+\boldsymbol{\varepsilon}$. Here, $\H\in\C^{m\times n}$ denotes the channel matrix, $\y\in \C^m$ represents the observed signal, and $\boldsymbol{\varepsilon}\in \C^m$ is additive Gaussian noise with i.i.d. entries drawn from ${\cal N}(0,\varrho^2)$. The true signal $\w^*$ belongs to the constraint set $\Omega:=\{\w\in\C^n: {\rm Re}(\w)\in\{0,1\}^n,{\rm Im}(\w)\in\{0,1\}^n\}$, where ${\rm Re}(\w)$ and ${\rm Im}(\w)$ denote the real and imaginary components of $\w$, respectively. This detection problem can be formulated as a maximum likelihood estimation:

 \begin{eqnarray*}
 \min\limits_{\x\in{\R^{2n}}}~~ \frac{1}{2}\|\textbf{A}\x-\textbf{b}\|^2,~~~
{\rm s.t.}~~   \x\in \{0,1\}^{2n},
\end{eqnarray*}
where
\begin{eqnarray*}
\textbf{A}:= \left[
 \begin{array}{lr}
 {\rm Re}(\H)& -{\rm Im}(\H)\\
 {\rm Im}(\H)& {\rm Re}(\H)\\
 \end{array}
 \right],\qquad \textbf{b}:= \left[
 \begin{array}{ll}
 {\rm Re}(\y)\\
 {\rm Im}(\y)\\
 \end{array}
 \right],\qquad \x:= \left[
 \begin{array}{ll}
 {\rm Re}(\w)\\
 {\rm Im}(\w)\\
 \end{array}
 \right].
\end{eqnarray*}  

  \begin{figure}[!t]
 	\centering
 	
 	\begin{subfigure}[b]{0.49\textwidth}
 		\centering
 		\includegraphics[width=0.99\textwidth]{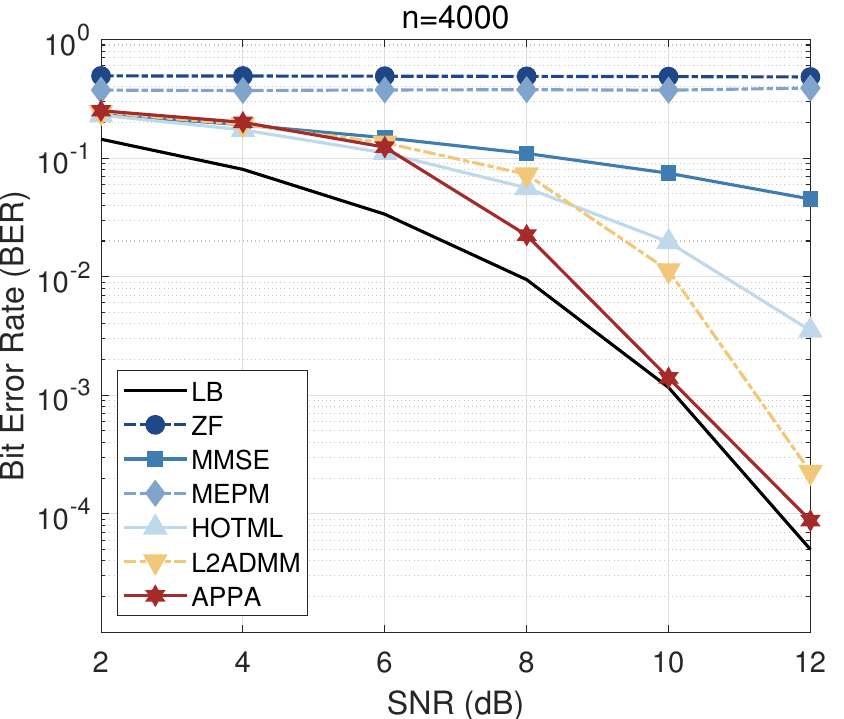}
 	\end{subfigure} 
 	\begin{subfigure}[b]{0.49\textwidth}
 		\centering
 		\includegraphics[width=0.99\textwidth]{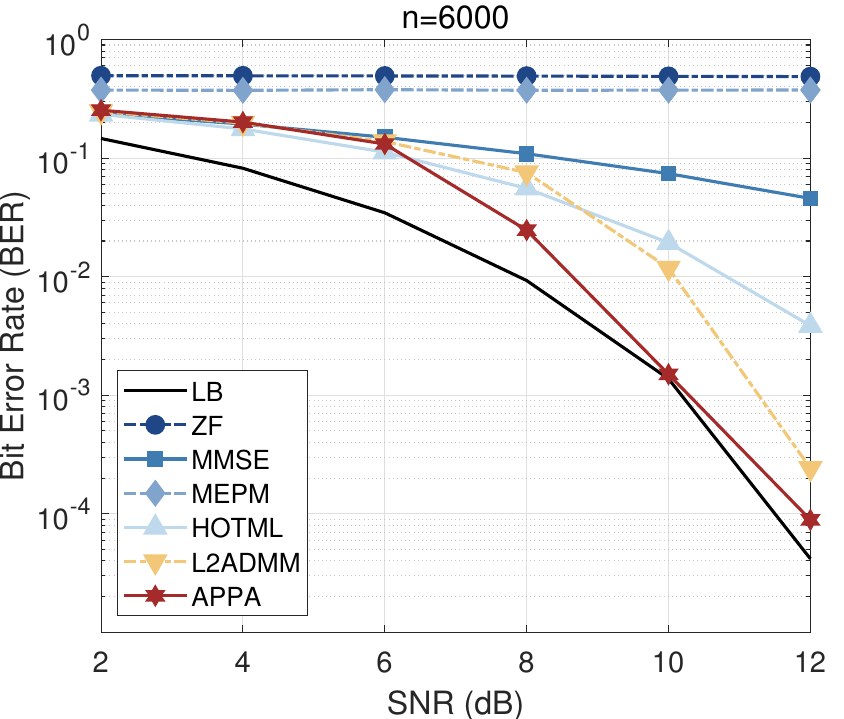}
 	\end{subfigure}
 	
 	\vspace{0.4cm}
 	
 	\begin{subfigure}[b]{0.49\textwidth}
 		\centering
 		\includegraphics[width=0.99\textwidth]{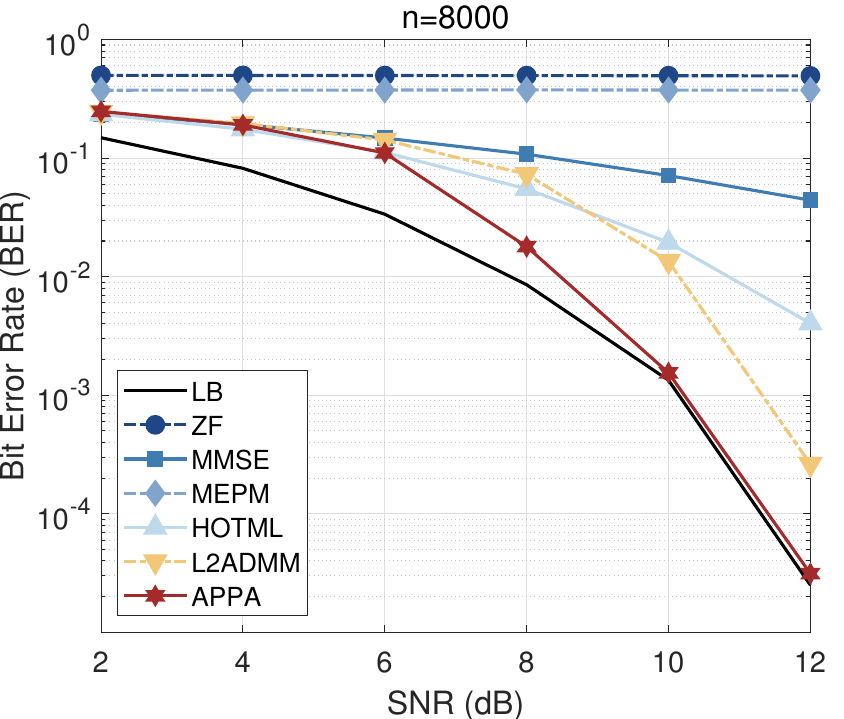} 
 	\end{subfigure} 
 	\begin{subfigure}[b]{0.49\textwidth}
 		\centering
 		\includegraphics[width=0.99\textwidth]{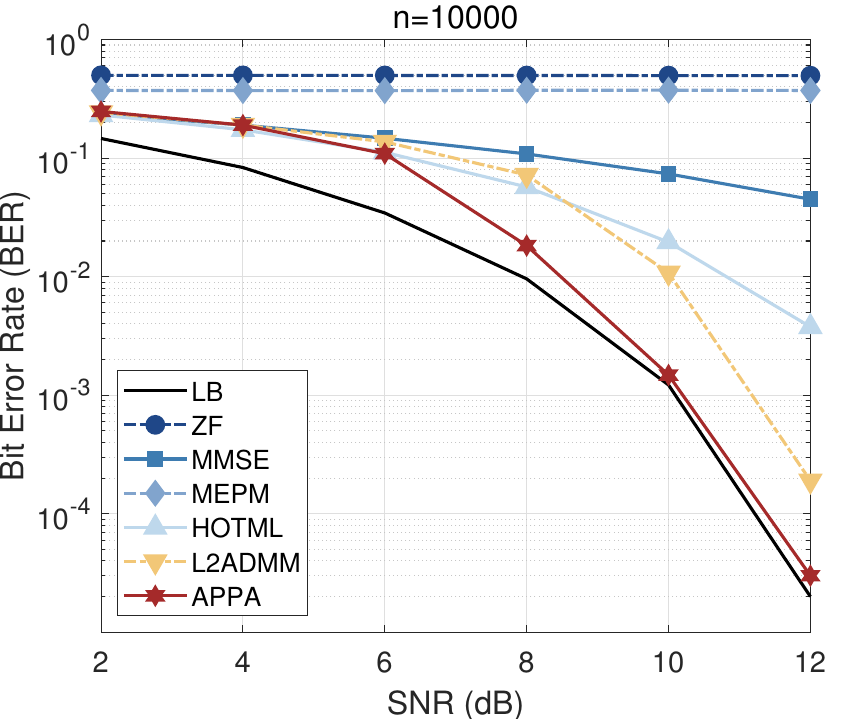}
 	\end{subfigure}
 	
 	\caption{Results on classical MIMO detection problems for i.i.d channels.}
 	\label{fig:BER-iid}
 \end{figure}
 
We consider two types of channel matrices $\textbf{H}$:
\begin{itemize}[leftmargin=15pt]
	\item \textbf{i.i.d. Gaussian channel:} The real and imaginary parts ${\rm Re}(\textbf{H})$ and ${\rm Im}(\textbf{H})$ are generated independently with entries sampled from the standard normal distribution, following the same procedure used for matrix $\textbf{A}$ in Section~\ref{sec:rec}.
	\item \textbf{Correlated MIMO channel:} Following \cite{loyka2001channel,shao2020binary}, we construct a spatially correlated channel as follows. Let $\widetilde{\textbf{H}}$ be an element-wise i.i.d. circularly symmetric complex Gaussian matrix with zero mean and unit variance. Define spatial correlation matrices $\textbf{R}\in\C^{m\times m}$ and $\textbf{T}\in\C^{n\times n}$ at the receiver and transmitter, respectively, with entries
	$$
	R_{ij}= \begin{cases}r^{i-j}, & \text{if } i \leq j, \\ R_{ji}^*, & \text{if } i > j,\end{cases}
	$$
	where ${|r| \leq 1}$. Matrix $\textbf{T}$ is constructed analogously. Performing Cholesky-like decompositions ${\textbf{R} = \textbf{P}\textbf{P}^*}$ and ${\textbf{T} = \textbf{Q}\textbf{Q}^*}$, the correlated channel matrix is given by $\textbf{H}=\textbf{P} \widetilde{\textbf{H}} \textbf{Q}$. We set $r = 0.2$ and $\textbf{R} = \textbf{T}$ in our experiments.
\end{itemize}
The transmitted signal $\textbf{w}^*$ is generated element-wise from a uniform distribution over the quaternary phase-shift keying (QPSK) constellation (see \cite{shao2020binary} for details). The noise variance $\varrho^2$ is determined by the signal-to-noise ratio (SNR), defined as
$$
\mathrm{SNR}=\mathbb{E}\|\textbf{H} \textbf{w}^*\|^2 / \mathbb{E}\|\boldsymbol{\varepsilon}\|^2.
$$
\vspace{-8mm}
 \begin{figure}[!t]
 	\centering
 	
 	\begin{subfigure}[b]{0.49\textwidth}
 		\centering
 		\includegraphics[width=0.99\textwidth]{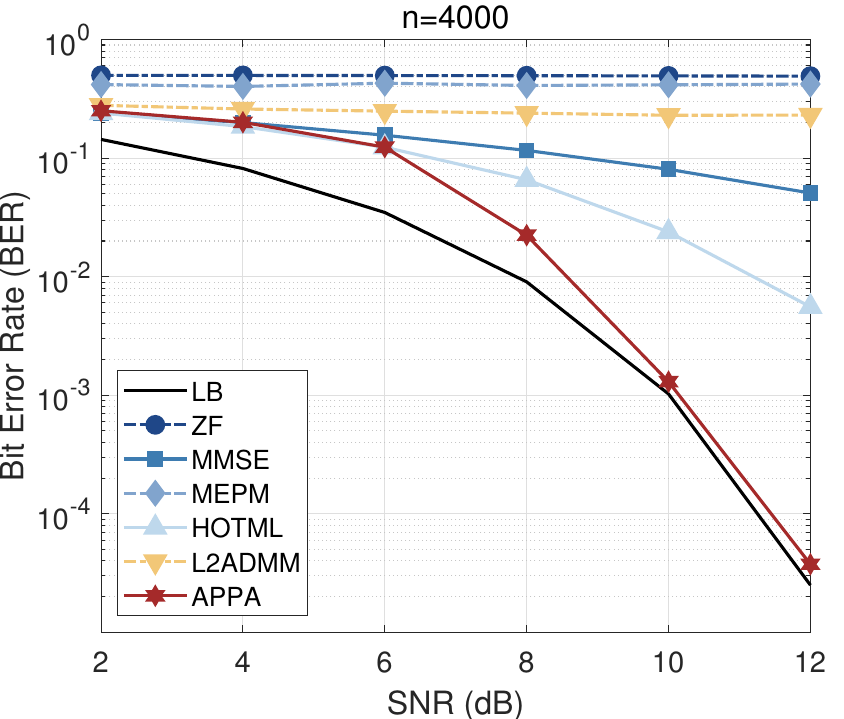}
 	\end{subfigure}
 	\begin{subfigure}[b]{0.49\textwidth}
 		\centering
 		\includegraphics[width=0.99\textwidth]{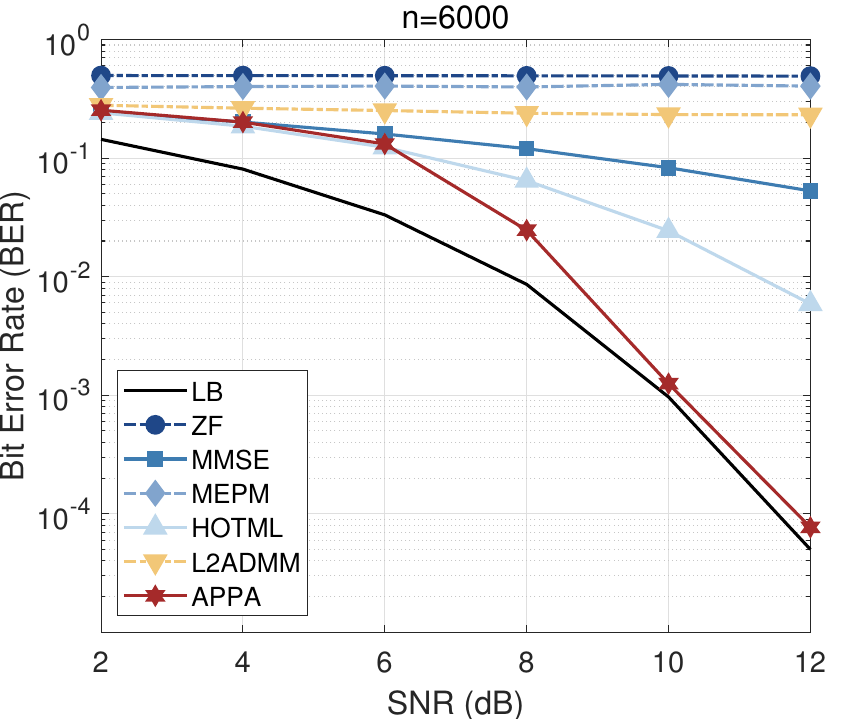}
 	\end{subfigure}
 	
 	\vspace{0.4cm}
 	
 	\begin{subfigure}[b]{0.49\textwidth}
 		\centering
 		\includegraphics[width=0.99\textwidth]{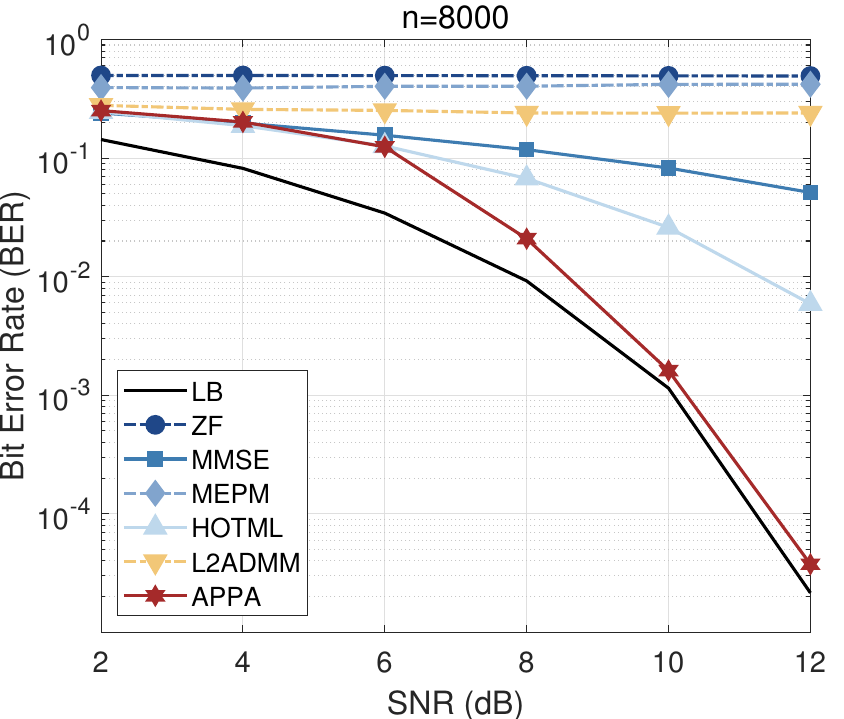} 
 	\end{subfigure}
 	\begin{subfigure}[b]{0.49\textwidth}
 		\centering
 		\includegraphics[width=0.99\textwidth]{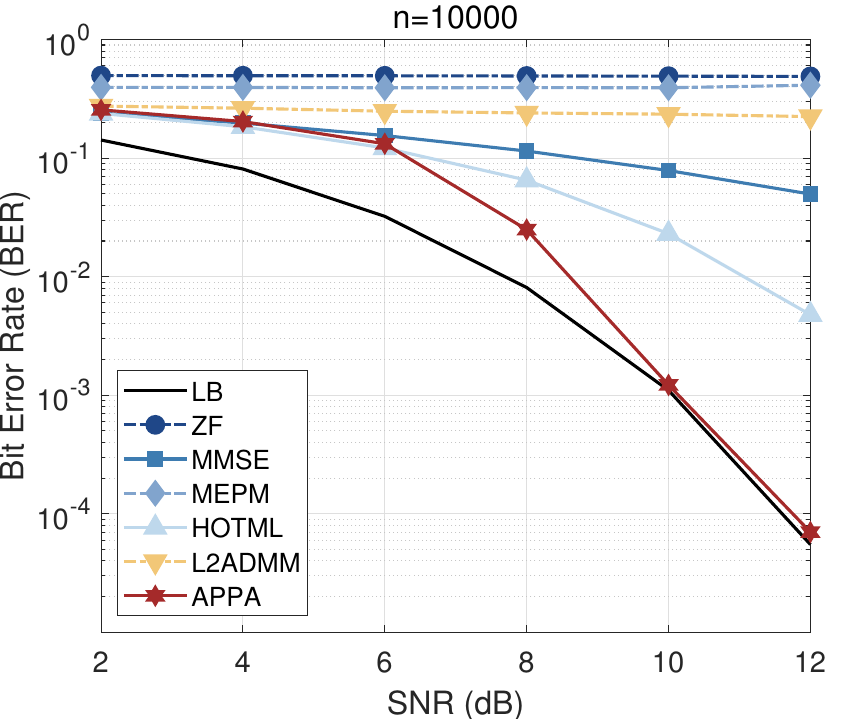}
 	\end{subfigure}
 	
 	\caption{Results on classical MIMO detection problems for correlated channels.} 
 	\label{fig:BER-cor}
 \end{figure}

 {\bf a) Benchmark methods.} We evaluate APPA against MEPM, L2ADMM, HOTML \cite{shao2020binary}, and two classical detectors: zero-forcing (ZF) and minimum-mean-square-error decision-feedback (MMSE-DF). A theoretical lower bound (LB) assuming no inter-signal interference is computed following \cite{shao2020binary}. The hyperparameters for all algorithms are summarized as follows. For APPA, we set $(\eta,\lambda,\pi,k_0)=(2,0.25,1.25,10)$ with initial penalty $\lambda_0=0.01\|\textbf{b}^{\top}\textbf{A}\|_{\infty}$; other parameters follow Section \ref{sec:rec}. MEPM and L2ADMM use identical settings to Section \ref{sec:rec}. For HOTML, we choose $\sigma_0=0.5$ and $\lambda_0=0.001$, with termination criteria: either $k>200$ iterations or $\|\lambda_k-\lambda_{k-1}\|\le 10^{-4}$. To ensure fair comparison, all algorithms start from $\x^0=\textbf{0}$. We assess detection accuracy through the bit error rate (BER), defined as
 $$
 \text{BER} = \frac{|\{i\in[2n]: x_i \neq x^*_i\}|}{2n},
 $$
 where $|\Omega|$ is the cardinality of $\Omega$. A smaller BER corresponds to more accurate signal recovery.
 
 
 \begin{table}[!t]
	\renewcommand{\arraystretch}{1.00}\addtolength{\tabcolsep}{6pt}
	\centering
	\caption{Average CPU time (in seconds) for classical MIMO detection problems, where A1-A4 stand for  APPA,   MEPM,   L2ADMM, and HOTML, respectively.}
 \begin{tabular}{c ccccc c ccc}
 			\hline
 			    &  \multicolumn{4}{c}{I.I.D channels} && \multicolumn{4}{c}{Correlated channels} \\
 			\cline{2-5} \cline{7-10}
 			SNR & A1 & A2 & A3 & A4  && A1 & A2 & A3 & A4   \\
 			\hline
 			&\multicolumn{9}{c}{$n=4000$}\\\hline
			 2 & 2.496 & 28.09 & 38.18 & 26.08 && 2.478 & 27.70 & 36.72 & 25.85 \\
			 4 & 2.498 & 28.17 & 38.22 & 26.12 && 2.625 & 28.42 & 34.68 & 25.86 \\
			 6 & 2.437 & 28.04 & 38.38 & 26.18 && 2.472 & 27.29 & 34.46 & 25.87 \\
			 8 & 2.289 & 27.94 & 38.47 & 26.40 && 2.354 & 27.99 & 34.39 & 25.85 \\
			 10 & 1.716 & 27.98 & 38.38 & 26.12 && 1.988 & 27.93 & 34.44 & 25.84 \\
			 12 & 1.763 & 27.48 & 33.89 & 26.42 && 1.911 & 27.72 & 35.24 & 25.92 \\
\hline
	&\multicolumn{9}{c}{$n=6000$}\\\hline
			 2 & 5.760 & 68.48 & 92.39 & 103.4 && 5.926 & 70.58 & 91.65 & 101.5 \\
			 4 & 5.852 & 68.74 & 92.60 & 103.3 && 5.856 & 70.24 & 86.82 & 101.5 \\
			 6 & 5.728 & 68.24 & 92.21 & 103.4 && 5.617 & 69.64 & 84.01 & 101.5 \\
			 8 & 5.532 & 68.21 & 92.66 & 103.8 && 5.599 & 70.35 & 84.27 & 101.5 \\
			 10 & 4.027 & 68.33 & 92.45 & 103.7 && 4.794 & 68.53 & 85.37 & 101.6 \\
			 12 & 3.823 & 68.08 & 83.09 & 103.8 && 3.976 & 69.68 & 86.70 & 101.5 \\
			\hline
				&\multicolumn{9}{c}{$n=8000$}\\\hline
			 2 & 12.25 & 134.1 & 180.7 & 251.6 && 12.36 & 131.9 & 177.5 & 261.9 \\
			 4 & 12.81 & 134.2 & 180.9 & 251.3 && 12.16 & 132.4 & 157.5 & 261.9 \\
			 6 & 12.34 & 134.1 & 181.8 & 251.3 && 12.09 & 129.9 & 157.3 & 261.8 \\
			 8 & 10.48 & 134.5 & 182.5 & 251.7 && 10.65 & 130.7 & 156.9 & 262.0 \\
			 10 & 8.994 & 133.0 & 179.9 & 253.1 && 8.716 & 127.6 & 159.7 & 261.8 \\
			 12 & 8.657 & 133.7 & 174.4 & 253.1 && 7.189 & 128.4 & 164.6 & 262.0 \\
			\hline
				&\multicolumn{9}{c}{$n=10000$}\\\hline
			 2 & 19.76 & 204.4 & 274.7 & 496.4 && 17.19 & 205.4 & 290.1 & 509.4 \\
			 4 & 19.31 & 204.2 & 275.3 & 496.7 && 17.84 & 204.6 & 263.0 & 509.6 \\
			 6 & 19.87 & 205.0 & 275.3 & 496.2 && 18.37 & 202.5 & 246.4 & 498.9 \\
			 8 & 16.59 & 204.4 & 277.8 & 496.5 && 15.65 & 206.2 & 244.0 & 509.1 \\
			 10 & 13.63 & 204.8 & 276.6 & 497.0 && 13.00 & 206.2 & 258.9 & 509.1 \\
			 12 & 13.32 & 200.9 & 265.0 & 497.1 && 12.29 & 200.5 & 251.4 & 509.0 \\
			\hline
		\end{tabular} 
	\label{tab:MIMO-time}
\end{table}

 {\bf b) Numerical comparison.}  We evaluate performance on critically determined systems (${m = n}$) with $n\in\{4000,6000,8000,10000\}$. The BER versus SNR curves for i.i.d. and correlated channels are shown in Figures~\ref{fig:BER-iid} and~\ref{fig:BER-cor}, respectively, where each data point represents the average of 20 trials. In the i.i.d. channel scenario, APPA delivers the best BER performance, with L2ADMM and HOTML ranking second and third, while ZF exhibits the poorest accuracy. Under correlated channels, APPA consistently achieves the minimum BER at most SNR levels, with HOTML as the second-best performer. Notably, L2ADMM fails to produce reliable results in the correlated setting. The corresponding computational time is reported in Table~\ref{tab:MIMO-time} for problem sizes ${n=10^3}$ and ${n=10^4}$. APPA exhibits superior speed across all test cases. As an illustrative example, for the correlated channel with ${n=10^4}$ and SNR = 12 dB, APPA requires only 13.32 seconds, while the other three methods each require over 200 seconds.

\subsubsection{One-bit MIMO detection}
The one-bit MIMO detection \cite{shao2020binary} aims to recover the transmitted binary signal $\textbf{z}\in\{-1,1\}^n$ from one-bit quantized observations $\textbf{y}\in\{-1,1\}^m$ according to the measurement model
\begin{equation}
	\label{eq:onebit}
	\textbf{y}=\operatorname{sgn}(\textbf{H} \textbf{z}+\textbf{v}),
\end{equation}
where $\operatorname{sgn}(\cdot)$ denotes the element-wise sign function that maps each component to $\{-1,1\}$, $\textbf{H} \in \mathbb{R}^{m \times n}$ represents the known channel matrix, and $\textbf{v} \in \mathbb{R}^m$ is additive Gaussian noise with i.i.d. entries drawn from ${\cal N}(0,\varrho^2)$. The noise variance $\varrho^2$ characterizes the quality of the communication channel. Under this observation model, the maximum-likelihood detector takes the form
\begin{equation}
	\label{eq:onebitprob}
	\min\limits_{\z\in\{-1,1\}^{n}}~~ -\sum_{i=1}^m \log \Phi\left(\frac{y_i \langle\textbf{h}_i, \z\rangle}{\varrho}\right),
\end{equation}
where $\Phi(t)=\int_{-\infty}^t \frac{1}{\sqrt{2 \pi}} e^{-\tau^2 / 2} d \tau$ is the cumulative distribution function (CDF) of the standard normal distribution, and $\textbf{h}_i$ denotes the $i$-th row of $\textbf{H}$. To reformulate problem \eqref{eq:onebitprob} into the standard form \eqref{UBIP}, we perform the variable transformation $\x=(\z+\mathbf{1})/2$, which maps the bipolar constraint set $\{-1,1\}^n$ to the binary set $\{0,1\}^n$. 

 {\bf a) Benchmark methods.} The benchmark algorithms include the zero-forcing (ZF) detector, MEPM, L2ADMM, and HOTML. For APPA, we set $(\eta,\lambda,\pi, k_0)=(0.1,0.5,1.2,10)$,  $\theta=\|\H\|_{\infty}+\|\y\|_{\infty}$, and $\lambda_0=0.005\|\y^{\top}\H\|_{\infty}$. For MEPM and L2ADMM, we set $L=\|\H\|_F^2/n$.  HOTML maintains the parameter settings used in the classical MIMO case. Its maximum number of iterations is set to 300. 

{\bf b) Numerical comparison.} We first investigate how the measurement-to-dimension ratio $m/n$ affects detection performance. Fixing ${\text{SNR} = 15}$ dB, we vary ${m/n \in \{1, 1.5, \ldots, 3\}}$ and test different problem dimensions. Figure~\ref{fig:BER-1bit-effmn} presents the BER averaged over 20 independent trials. The results show that APPA consistently delivers the lowest BER across all tested configurations.

	\begin{figure}[!t]
		\centering
		
		\begin{subfigure}[b]{0.49\textwidth}
			\centering
			\includegraphics[width=0.99\textwidth]{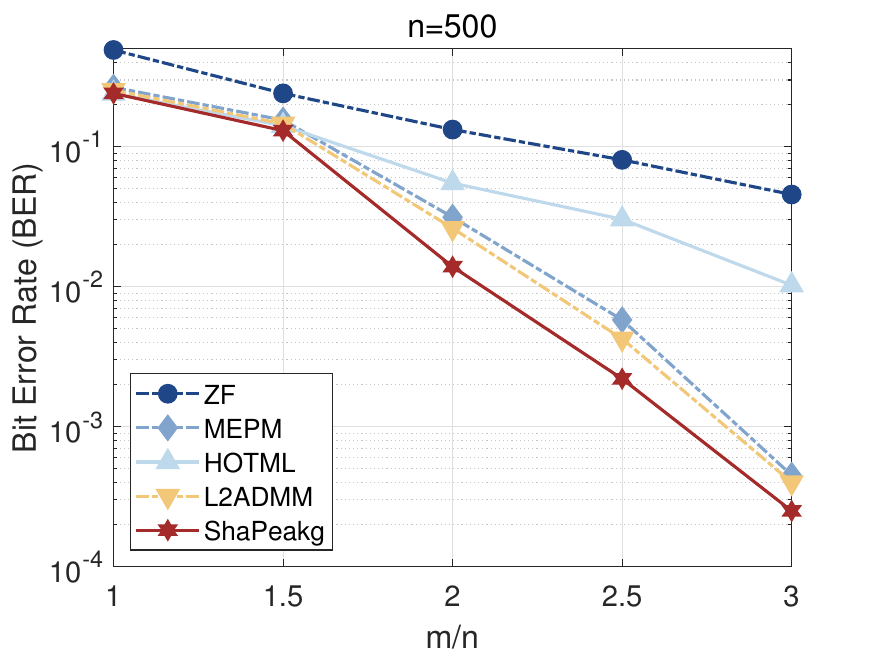}
		\end{subfigure} 
		\begin{subfigure}[b]{0.49\textwidth}
			\centering
			\includegraphics[width=0.99\textwidth]{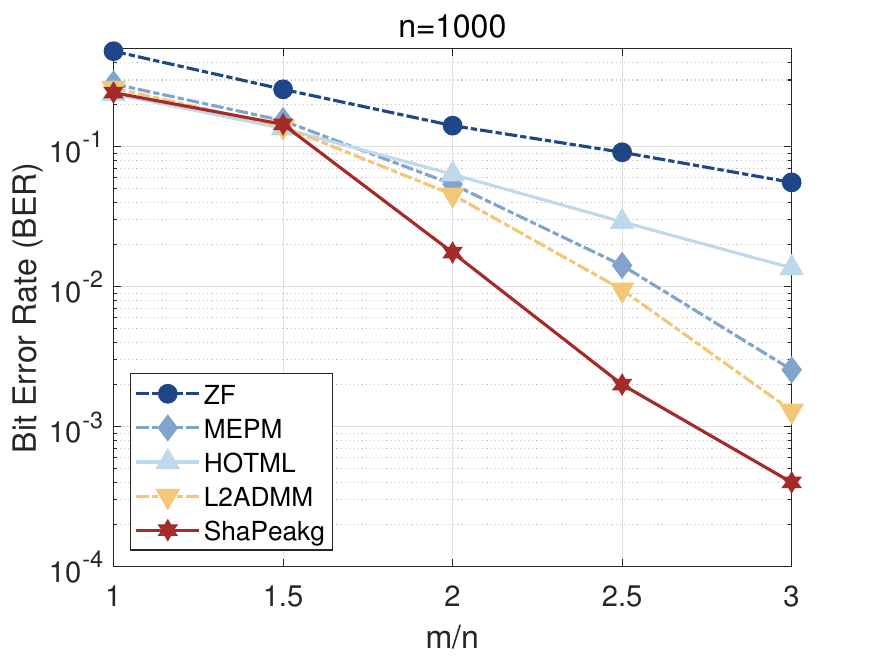}
		\end{subfigure}
		
		\caption{Effect of $m/n$ for one-bit MIMO detection problems.}
		\label{fig:BER-1bit-effmn}
	\end{figure}

	\begin{figure}[!t]
		\centering
		
		\begin{subfigure}[b]{0.49\textwidth}
			\centering
			\includegraphics[width=0.99\textwidth]{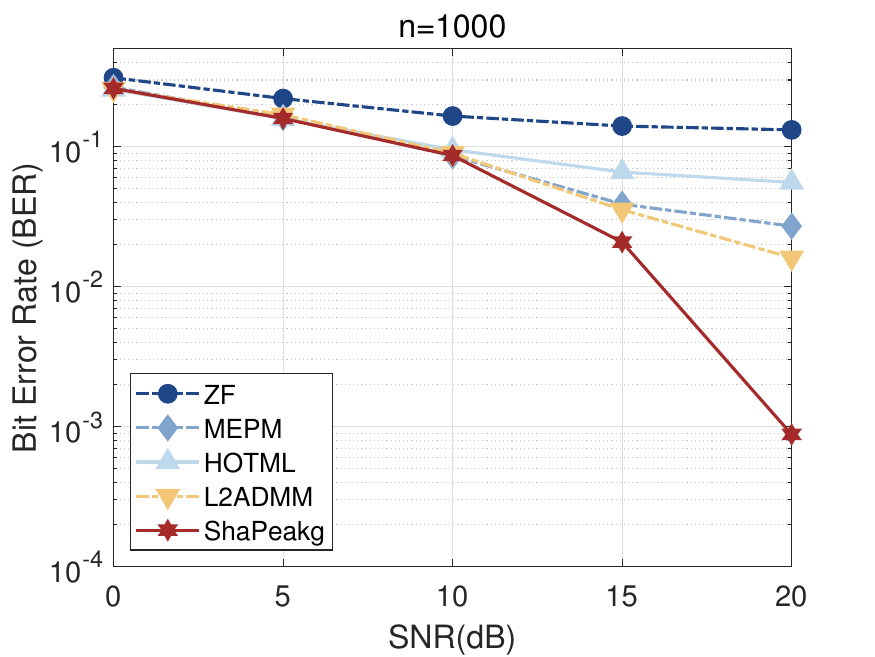}
		\end{subfigure} 
		\begin{subfigure}[b]{0.49\textwidth}
			\centering
			\includegraphics[width=0.99\textwidth]{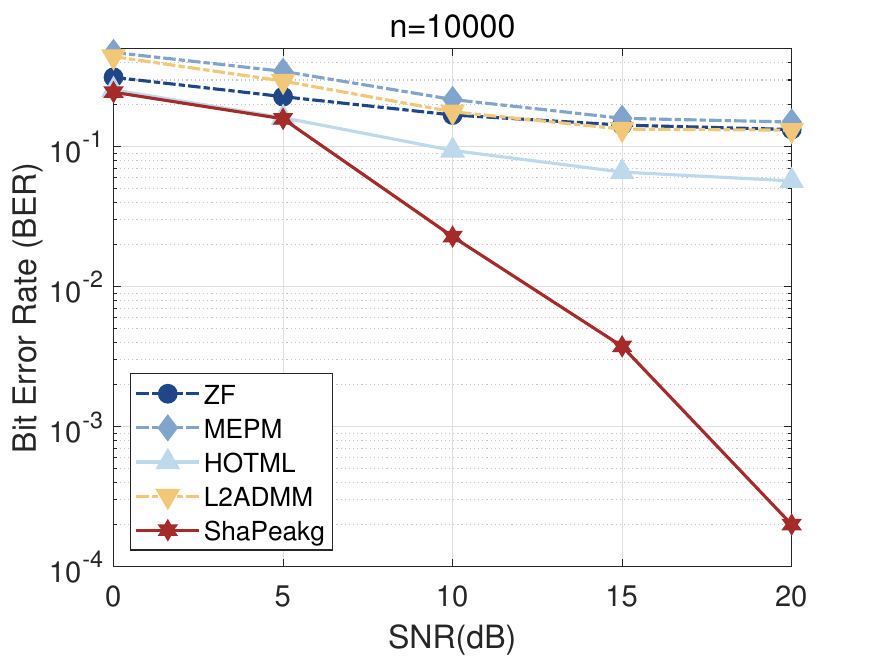}
		\end{subfigure}
		
%
%
		\caption{Effect of SNR for  one-bit MIMO detection problems.}
		\label{fig:BER-1bit}
	\end{figure}

	\begin{table}[!t]
		\renewcommand{\arraystretch}{1.0}\addtolength{\tabcolsep}{6pt}
		\centering
		\caption{Average CPU time (in seconds) for one-bit MIMO detection problems, where A1-A4 stand for  APPA,   MEPM,   L2ADMM, and HOTML, respectively.}
			\begin{tabular}{cccccccccc}
				\hline
			  	&\multicolumn{4}{c}{$m=1000, n=500$}&&\multicolumn{4}{c}{$m=10000, n=5000$}\\\cline{2-5}\cline{7-10}
			  	 SNR   & A1 & A2 & A3 & A4    & & A1 & A2 & A3 & A4   \\ \hline
			
				  0 & 0.024 & 0.609 & 0.856 & 0.024 && 1.356 & 142.9 & 188.4 & 4.021\\
				  5 & 0.030 & 0.649 & 0.912 & 0.033 && 1.547 & 143.4 & 196.8 & 6.502\\
				  10 & 0.081 & 0.673 & 0.934 & 0.069 && 4.399 & 139.8 & 196.3 & 13.88\\
				  15 & 0.105 & 0.678 & 0.994 & 0.088 && 7.287 & 144.0 & 195.7 & 14.28\\
				  20 & 0.120 & 0.693 & 1.017 & 0.087 && 8.919 & 143.0 & 197.2 & 14.45\\
				\hline
			\end{tabular}
		\label{tab:1bitMIMO-time}
	\end{table}

Next, we examine performance under varying noise levels by fixing ${m/n = 2}$ and testing ${\text{SNR}} \in \{0, 5$, $\ldots, 20\}$. The average BER over 20 runs is displayed in Figure~\ref{fig:BER-1bit}. At low SNR levels (${\text{SNR} \leq 10}$ dB), all algorithms exhibit poor performance, failing to reliably recover the transmitted signal. However, as SNR increases beyond 10 dB, APPA begins to outperform competing methods, with its superiority becoming increasingly evident at higher SNR values. Table~\ref{tab:1bitMIMO-time} reports the runtime comparison. APPA demonstrates substantial computational advantages across all scenarios. For example, in the case of ${n = 5000}$ and ${\text{SNR} = 20}$ dB, APPA completes in approximately 9 seconds, whereas L2ADMM requires nearly 200 seconds.

%

\subsection{QUBO}
	The QUBO problem takes the following form, 
 \begin{eqnarray}\nonumber
\min\limits_{\x\in{\R^n}}~ \frac{1}{2}\langle \x, \Q\x\rangle,~~~
{\rm s.t.}~~ \x\in \{0,1\}^n,
\end{eqnarray}
where ${\Q\in\R^n}$ is a symmetric matrix. We consider two types of test instances for evaluating algorithmic performance on QUBO problems. The first corresponds  standard benchmark problems from the Beasley instances \cite{wiegele2007biq}, which are widely used for evaluating QUBO solvers. For this instance, we test three problem sizes: dimensions $n \in \{100, 250, 500\}$. Each size category contains 10 instances with densities ranging from 0.2 to 0.8. The second corresponds to synthetically generated instances following \cite{chen2023montecarlopolicygradient}, where $\textbf{Q}$ is constructed with density 0.8 and nonzero entries uniformly distributed over $[10, 100]$ when $n<10^4$, and with density 0.005 when $n \geq 10^4$. 

The algorithms benchmarked in this experiment include  MEPM, L2ADMM, and GUROBI. The hyperparameters are given as follows. For APPA, we set $(\eta, \sigma, \lambda, \pi, k_0) = (1, 10^{-8}, 0.25, 1.5, 100)$, ${\theta = \|\textbf{Q}\|_{\infty}}$ and ${\lambda_0 = 0.001\|\textbf{Q}\|_F}$. For MEPM and L2ADMM, we set ${L ={1}/{n} \|\textbf{Q}\|_F^2}$, while other parameters remain consistent with those used in previous experiments. Default settings are adopted for GUROBI. To evaluate performance, we compute the relative optimality gap by
\begin{eqnarray*}\label{def-gap-obj}
 \text{Gap} = \frac{|\text{obj}-\text{lowest}|}{|\text{lowest}|} \times 100\%,
\end{eqnarray*} 
where `obj' denotes the objective value obtained by a specific algorithm, and `lowest' denotes the the latest best-known values (for Beasley benchmarks) or lowest objective value achieved among all compared methods (for synthetic instances).

{\bf a) Comparisons on Beasley instances.} Table~\ref{tab:beasley-results} presents the results on Beasley benchmark instances. In terms of solution quality, APPA achieves gaps second only to GUROBI across most instances, and even attains the optimal solution in several cases (e.g., bqp100-1, bqp100-10, bqp250-7). Regarding computational efficiency, APPA maintains the shortest computational time in the majority of instances, while GUROBI's runtime reaches 600 seconds for all problems with $n \geq 250$.

\begin{table}[!h]
	\renewcommand{\arraystretch}{0.96}\addtolength{\tabcolsep}{4pt}
	\centering
	\caption{Results on Beasley benchmark instances}
	\label{tab:beasley-results}
	\begin{tabular}{lccccccccccc}
		\hline
		& \multicolumn{2}{c}{APPA} & & \multicolumn{2}{c}{MEPM} & & \multicolumn{2}{c}{L2ADMM} & & \multicolumn{2}{c}{GUROBI} \\
		\cline{2-3} \cline{5-6} \cline{8-9} \cline{11-12}
		Instance & gap & time & & gap & time & & gap & time & & gap & time \\
		\hline
		bqp100-1 & 0.00 & 0.015 & & 2.31 & 0.053 & & 2.08 & 0.082 & & 0.00 & 0.420 \\
		bqp100-2 & 0.43 & 0.004 & & 0.45 & 0.037 & & 0.45 & 0.047 & & 0.00 & 0.337 \\
		bqp100-3 & 0.28 & 0.005 & & 0.28 & 0.034 & & 0.28 & 0.051 & & 0.28 & 0.474 \\
		bqp100-4 & 0.46 & 0.004 & & 0.60 & 0.035 & & 0.60 & 0.034 & & 0.46 & 0.422 \\
		bqp100-5 & 2.37 & 0.004 & & 0.74 & 0.036 & & 0.74 & 0.050 & & 0.00 & 0.398 \\
		bqp100-6 & 1.68 & 0.071 & & 1.97 & 0.046 & & 1.97 & 0.054 & & 0.90 & 0.499 \\
		bqp100-7 & 1.15 & 0.004 & & 1.47 & 0.048 & & 1.31 & 0.070 & & 0.00 & 0.405 \\
		bqp100-8 & 1.03 & 0.054 & & 1.63 & 0.055 & & 1.63 & 0.066 & & 0.00 & 0.326 \\
		bqp100-9 & 0.10 & 0.003 & & 0.10 & 0.034 & & 0.10 & 0.029 & & 0.00 & 0.330 \\
		bqp100-10 & 0.00 & 0.003 & & 0.14 & 0.044 & & 0.14 & 0.057 & & 0.00 & 0.327 \\
		\hline
		bqp250-1 & 0.62 & 0.046 & & 0.90 & 0.079 & & 0.88 & 0.119 & & 0.00 & 600.0 \\
		bqp250-2 & 0.85 & 0.003 & & 1.58 & 0.054 & & 1.58 & 0.052 & & 0.00 & 600.0 \\
		bqp250-3 & 0.24 & 0.040 & & 0.25 & 0.075 & & 0.25 & 0.092 & & 0.00 & 600.0 \\
		bqp250-4 & 0.38 & 0.002 & & 0.39 & 0.066 & & 0.39 & 0.073 & & 0.00 & 600.0 \\
		bqp250-5 & 0.38 & 0.006 & & 0.47 & 0.062 & & 0.47 & 0.091 & & 0.20 & 600.0 \\
		bqp250-6 & 0.28 & 0.003 & & 0.62 & 0.075 & & 0.62 & 0.084 & & 0.10 & 600.0 \\
		bqp250-7 & 0.00 & 0.002 & & 0.11 & 0.079 & & 0.11 & 0.112 & & 0.00 & 600.0 \\
		bqp250-8 & 4.11 & 0.044 & & 3.09 & 0.078 & & 2.76 & 0.116 & & 0.00 & 600.0 \\
		bqp250-9 & 0.56 & 0.021 & & 1.43 & 0.083 & & 1.29 & 0.128 & & 0.03 & 600.0 \\
		bqp250-10 & 0.21 & 0.003 & & 0.40 & 0.075 & & 0.40 & 0.081 & & 0.00 & 600.0 \\
		\hline
		bqp500-1 & 1.44 & 0.005 & & 2.25 & 0.185 & & 1.78 & 0.252 & & 0.06 & 600.0 \\
		bqp500-2 & 0.25 & 0.009 & & 0.51 & 0.192 & & 0.45 & 0.270 & & 0.00 & 600.0 \\
		bqp500-3 & 0.22 & 0.006 & & 0.59 & 0.156 & & 0.45 & 0.218 & & 0.00 & 600.0 \\
		bqp500-4 & 0.23 & 0.006 & & 0.39 & 0.150 & & 0.39 & 0.215 & & 0.00 & 600.0 \\
		bqp500-5 & 0.86 & 0.005 & & 1.27 & 0.150 & & 1.23 & 0.228 & & 0.12 & 600.0 \\
		bqp500-6 & 0.54 & 0.005 & & 2.38 & 0.165 & & 2.27 & 0.233 & & 0.00 & 600.0 \\
		bqp500-7 & 0.81 & 0.018 & & 1.85 & 0.171 & & 1.70 & 0.244 & & 0.08 & 600.0 \\
		bqp500-8 & 0.52 & 0.009 & & 1.26 & 0.166 & & 1.26 & 0.265 & & 0.00 & 600.0 \\
		bqp500-9 & 0.51 & 0.007 & & 1.14 & 0.161 & & 1.06 & 0.225 & & 0.00 & 600.0 \\
		bqp500-10 & 1.06 & 0.024 & & 0.69 & 0.155 & & 0.69 & 0.210 & & 0.00 & 600.0 \\
		\hline
	\end{tabular}
\end{table}

{\bf b) Comparisons on synthetic instances.} We first evaluate performance on small-scale problems, with results over 20 independent runs presented in Table~\ref{tab:qubo-gap-time-low}. Here, `best' and `mean' denote the minimum and average optimality gaps, respectively, among the 20 trials. Across all test cases, GUROBI achieves the smallest gap (i.e., the best objective values), with APPA as the second-best performer. However, APPA demonstrates a significant computational advantage, requiring substantially less time than competing methods.


\begin{table}[!t]
	\renewcommand{\arraystretch}{0.96}\addtolength{\tabcolsep}{-1pt}
	\centering
	\caption{Gap and time (in seconds) for low-dimensional instances} 
	\begin{tabular}{cccccccccccccccc}
		\hline
		& \multicolumn{3}{c}{APPA} && \multicolumn{3}{c}{MEPM} && \multicolumn{3}{c}{L2ADMM} && \multicolumn{3}{c}{GUROBI} \\ 
		\cline{2-4}\cline{6-8}\cline{10-12}\cline{14-16}
		$n$ 
		& best & mean & time 
		&& best & mean & time 
		&& best & mean & time 
		&& best & mean & time \\ \hline
		1000  & 0.39 & 0.80 & 0.057 && 2.17 & 2.82 & 0.352 && 1.92 & 2.39 & 0.462 && 0.00 & 0.00 & 600.0 \\
		3000  & 0.49 & 0.59 & 0.228 && 2.97 & 3.74 & 3.089 && 2.68 & 3.49 & 3.742 && 0.00 & 0.00 & 600.0 \\
		5000  & 0.39 & 0.53 & 1.041 && 3.86 & 4.56 & 22.05 && 3.42 & 4.15 & 25.73 && 0.00 & 0.00 & 600.0 \\
		7000  & 0.30 & 0.39 & 2.691 && 4.56 & 4.97 & 47.86 && 4.04 & 4.53 & 57.33 && 0.00 & 0.00 & 600.0 \\
		10000 & 0.28 & 0.37 & 6.082 && 5.43 & 5.76 & 112.3 && 4.88 & 5.32 & 129.8 && 0.00 & 0.00 & 600.0 \\ \hline
	\end{tabular}
	\label{tab:qubo-gap-time-low}
\end{table}

For large-scale experiments with ${n \in \{80000, 100000\}}$, we exclude GUROBI due to its excessive runtime requirements. Here, cases 1-5 represent five different proportions of nonnegative entries in $\textbf{Q}$: $\{0.4995, 0.4999, 0.49995, 0.49999, 0.5\}$, respectively. Table~\ref{tab:qubo-high} shows that APPA consistently obtains lower objective values and dramatically faster computation times compared to MEPM and L2ADMM. For example, in case 5 with ${n = 10^5}$, APPA completes in approximately 80 seconds, while MEPM and L2ADMM require 857 and 1083 seconds.


\begin{table}[!t]
	\renewcommand{\arraystretch}{1.0}\addtolength{\tabcolsep}{2pt}
	\centering
	\caption{Gap and time (in seconds) for higher dimensional instances}
	\begin{tabular}{ccccccccccccc}
		\hline
		& &  \multicolumn{3}{c}{APPA} && \multicolumn{3}{c}{MEPM} && \multicolumn{3}{c}{L2ADMM} \\ 
		\cline{3-5}\cline{7-9}\cline{11-13}
		$n$ & case & best & mean & time && best & mean & time && best & mean & time \\
		\hline
		\multirow{5}{*}{8e4} 
		& 1 & 0.00 & 0.00 & 47.50 && 1.53 & 1.63 & 613.7 && 1.29 & 1.36 & 768.5 \\
		& 2 & 0.00 & 0.00 & 49.38 && 1.56 & 1.65 & 613.5 && 1.29 & 1.37 & 780.8 \\
		& 3 & 0.00 & 0.00 & 44.67 && 1.58 & 1.64 & 605.8 && 1.30 & 1.36 & 770.4 \\
		& 4 & 0.00 & 0.00 & 52.11 && 1.66 & 1.69 & 613.8 && 1.36 & 1.40 & 760.3 \\
		& 5 & 0.00 & 0.00 & 43.05 && 1.63 & 1.67 & 607.7 && 1.36 & 1.39 & 765.4 \\
		\hline
		\multirow{5}{*}{1e5} 
		& 1 & 0.00 & 0.00 & 87.10 && 1.79 & 1.80 & 901.5 && 1.49 & 1.51 & 1133.8 \\
		& 2 & 0.00 & 0.00 & 67.87 && 1.69 & 1.80 & 899.8 && 1.69 & 1.50 & 1138.7 \\
		& 3 & 0.00 & 0.00 & 75.53 && 1.78 & 1.82 & 898.8 && 1.46 & 1.51 & 1125.5 \\
		& 4 & 0.00 & 0.00 & 77.02 && 1.76 & 1.80 & 873.3 && 1.47 & 1.50 & 1109.1 \\
		& 5 & 0.00 & 0.00 & 76.69 && 1.78 & 1.80 & 857.6 && 1.50 & 1.51 & 1083.6 \\
		\hline
	\end{tabular}
	\label{tab:qubo-high}
\end{table}


 
\section{Conclusion}


The proposed piecewise cubic function provides an exact continuous reformulation of binary constraints, transforming the combinatorially hard problem into a tractable continuous optimization framework. The key contribution is threefold: exact penalty theory with a solution-independent threshold, P-stationary points for optimality characterization, and the APPA algorithm with finite-iteration convergence guarantee. Extensive numerical experiments demonstrate that APPA achieves superior performance compared to other solvers across diverse problems. Future work includes the extension to constrained binary optimization and exploring applications to other applications.


 
\vskip 0.2in
\bibliographystyle{abbrv}
\bibliography{refs}

\end{document}